\newtheorem{theorem}{Theorem}[section]
\newtheorem{lemma}[theorem]{Lemma}
\newtheorem{remark}{Remark}
\renewenvironment{proof}[1][\proofname]{\par
	\pushQED{\qed}%
	\normalfont \topsep6\p@\@plus6\p@\relax
	\trivlist
	\item[\hskip\labelsep
	\itshape #1\@addpunct{.}]\ignorespaces
}{%
	\popQED\endtrivlist\@endpefalse
}
\newcommand{\be}{\begin{equation}}
	 \newcommand{\ee}{\end{equation}}
\newcommand{\bea}{\begin{eqnarray}}
	 \newcommand{\eea}{\end{eqnarray}}
\newcommand{\e}{{\rm e}}
\newcommand{\keywords}[1]{\textbf{Keywords:} #1}
\newtheorem{exm}{Example}
\newtheorem{assumption}{Assumption}
\def\tr {{\rm tr}}
\numberwithin{equation}{section}
\title{Ladder Operators for Laguerre-type and Jacobi-type\\ Orthogonal Polynomials}
\author[1,*]{Shulin Lyu}
\author[1,$\dag$]{Yuanfei Lyu}  
\affil[1]{School of Mathematics and Statistics, Qilu University of Technology (Shandong Academy of Sciences), Jinan 250353, China}
\affil[*]{\texttt{lvshulin1989@163.com}, Corresponding author}
\affil[$\dag$]{\texttt{lvyuanfei5720@163.com}}
\date{\today}
\begin{document}
	\maketitle
	
	\begin{abstract}
In the literature concerning the Laguerre-type weight function $x^\lambda w_0(x), x\in[0,+\infty)$, the Jacobi-type weight function $(1-x)^{\alpha}(1+x)^{\beta}w_0(x),x\in[-1,1]$, and the shifted Jacobi-type weight function $x^{\alpha}(1-x)^{\beta}w_0(x), x\in[0,1]$, with $w_0(x)$ positive and continuously differentiable, the parameters $\lambda,\alpha,\beta$ are usually constrained  to be strictly positive to ensure the validity of the results. Recently, in [C. Min and P. Fang, Physica D 473 (2025), 134560 (9pp)], the ladder operators for the monic Laguerre-type orthogonal polynomials with $\lambda>-1$ were derived by exploiting the orthogonality properties. The quantities $A_n$ and $B_n$, which appear as coefficients in the ladder operators, exhibit different expressions compared with the previous ones for $\lambda>0$. In this paper, we construct an alternative deduction by making use of the Riemann-Hilbert problem satisfied by the orthogonal polynomials. Moreover, we employ both derivation strategies mentioned above to produce the ladder operators for the monic standard and shifted Jacobi-type orthogonal polynomials with $\alpha,\beta>-1$.
When $\lambda,\alpha,\beta$ are restricted to positive values, our expressions of $A_n$ and $B_n$ are consistent with those in prior work. We present examples to validate our findings and generalize the existing conclusions, established by using the three compatibility conditions of the ladder operators and differentiating the orthogonality relations for the monic orthogonal polynomials, from $\lambda,\alpha,\beta>0$ to $\lambda,\alpha,\beta>-1$.
	\end{abstract}
	\noindent
	
	\keywords{Ladder operators; Laguerre-type orthogonal polynomials; Jacobi-type

 orthogonal polynomials;
Riemann-Hilbert problem}


\section{Introduction}\label{section1}
We consider the Laguerre-type weight function
\begin{equation}\label{wL}
	w_{_L}(x)=x^\lambda w_0(x),\qquad x\in [0, +\infty),\quad\lambda>-1,
\end{equation}
the Jacobi-type weight function
\begin{equation}\label{wJ}
w_{_J}(x)=(1-x)^{\alpha}(1+x)^{\beta}w_0(x),\qquad x\in[-1,1],\quad\alpha,\beta>-1,
\end{equation}
and the shifted Jacobi-type weight function
\begin{equation}\label{wJS}
w_{_{JS}}(x)=x^{\alpha}(1-x)^{\beta}w_0(x),\qquad x\in[0,1],\quad\alpha,\beta>-1.
\end{equation}
Denote the above three types of weight functions by $w(x),x\in I$, where $I=[0,+\infty),[-1,1]$ and $[0,1]$ for $w_{_L}(x),w_{_J}(x)$ and $w_{_{JS}}(x)$ respectively. We make the following assumptions:
\begin{assumption}\label{ass:moments}
All moments of $w(x)$,  i.e. $\int_Ix^jw(x)dx,j=0,1,\ldots$, are finite;
\end{assumption}
\begin{assumption}\label{ass:w0}
$w_0(x)$ is a continuously differentiable function, $w_0(x)>0$ and $w_0(x)$ are bounded at the finite endpoints of $I$;
\end{assumption}
\begin{assumption}\label{ass:wL}
$\lim\limits_{x\to +\infty}w_{_L}(x)\pi(x)=0$ with $\pi(x)$ representing an arbitrary polynomial.
\end{assumption}

For $w(x)$, there exists a unique sequence of monic orthogonal polynomials \cite[Theorem 2.1.1]{Ismail}
\begin{align}\label{mp}
P_j(x):=x^j+\textbf{p}(j)x^{j-1}+\dots+P_j(0),\quad\qquad j=1,2,\cdots,
\end{align}
with $P_0(x):=1$ and $\textbf{p}(0):=0$, satisfying
\begin{align}\label{dp}	 \int_{I}P_i(x)P_j(x)w(x)dx=h_j\delta_{ij},\quad\qquad i,j=0,1,\cdots,
\end{align}
where $\delta_{ij}$ is the Kronecker delta function, i.e. $\delta_{ij}=1$ for $i=j$ and $0$ otherwise. Since every polynomial $\pi_k(x)$ of degree $k$ can be expressed as a linear combination of $\{P_{\ell}(x)\}_{\ell=0}^{k}$, one has by \eqref{dp}
\begin{align}\label{or-1}
\int_I\pi_k(x)P_{j}(x)w(x)dx=0, \quad\qquad 0\leq k\leq j-1,
\end{align}
for $j\geq1$, and
\begin{align}\label{or-2}
\int_I\pi_j(x)P_{j}(x)w(x)dx=\gamma_jh_{j},\quad\qquad j\geq0,
\end{align}
with $\gamma_j$ denoting the leading coefficient of $\pi_j(x)$.
The Hankel determinant generated by the moments of $w(x)$ reads
\begin{align*}\label{D_nh_n}
D_n[w(x)]:=&\det \left( \int_Ix^{i+j}w(x)dx\right) _{i,j=0}^{n-1}.
\end{align*}
According to Heine's formula, $D_n$ admits the following alternative representations \cite[Section 2.1]{Ismail}
\begin{align*}
D_n[w(x)]=&\frac{1}{n!}\int_{I^n}\prod_{1\leq i<j\leq n}(x_i-x_j)^2\prod_{k=1}^n w(x_k)dx_1dx_2\cdots dx_n\\
=&\prod_{j=0}^{n-1}h_j.
\end{align*}

From \eqref{mp}-\eqref{dp}, one obtains the three-term recurrence relation
\begin{equation}\label{3r}	 xP_j(x)=P_{j+1}(x)+\alpha_jP_j(x)+\beta_jP_{j-1}(x),\quad\qquad j\geq0,
\end{equation}
with the initial condition $P_{-1}(x):=0$, and the coefficients are given by
\begin{align}\label{alp}
\alpha_j=\textbf{p}(j)-\textbf{p}(j+1),\quad\qquad j\geq0,
\end{align}
\begin{equation}\label{bth}
	\beta_j=\frac{h_j}{h_{j-1}},\quad\qquad j\geq1.
\end{equation}
Using \eqref{3r}, one gets the Christoffel-Darboux formula
\begin{equation}\label{cd}	 \sum_{j=0}^{n-1}\frac{P_j(x)P_j(y)}{h_j}=\frac{P_n(x)P_{n-1}(y)-P_{n-1}(x)P_n(y)}{h_{n-1}(x-y)}.
\end{equation}
Combining \eqref{dp}-\eqref{cd}, under the assumption that $\lambda,\alpha,\beta>0$, one derives a pair of ladder operators for $\{P_n(z)\}$:
	\begin{equation*}
P'_n(z)=-B_n(z)P_n(z)+\beta_nA_n(z)P_{n-1}(z),
	\end{equation*}
	\begin{equation*}
P_{n-1}'(z)=\left(B_n(z)+v'(z)\right)P_{n-1}(z)-A_{n-1}(z)P_n(z),
	\end{equation*}
with $n\geq0$, where $A_n(z)$ and $B_n(z)$ are defined by
	\begin{align}\label{a-n}		 A_n(z):=\frac{1}{h_n}\int_I\frac{v'(z)-v'(x)}{z-x}P_n^2(x)w(x)dx,\quad\qquad n\geq0,
	\end{align}
	\begin{equation}\label{b-n}		 B_n(z):=\frac{1}{h_{n-1}}\int_I\frac{v'(z)-v'(x)}{z-x}P_{n-1}(x)P_n(x)w(x)dx,\quad\qquad n\geq1,
\end{equation}
with $A_{-1}(z)=B_0(z):=0$. Here $v(x):=-\ln w(x)$. Moreover, $A_n(z)$ and $B_n(z)$ satisfy the following three identities:
 \begin{align}
B_{n+1}(z)+B_n(z)&=(z-\alpha_n)A_n(z)-v'(z)\tag{$S_1$},\\
1+(z-\alpha_n)(B_{n+1}(z)-B_n(z))&=\beta_{n+1}A_{n+1}(z)-\beta_nA_{n-1}(z)\tag{$S_2$},			 \end{align}
for $n\geq0$, and
\begin{align}
B^2_n(z)+v'(z)B_n(z)+\sum_{j=0}^{n-1}A_j(z)&=\beta_nA_n(z)A_{n-1}(z)\tag{$S^{'}_2$},
\end{align}
for $n\geq1$. The above-mentioned results hold for a generic weight function $w(x)={\rm e}^{-v(x)}$ which has finite moments of all orders on $[a,b]$ with $w(a)=w(b)=0$, where $v(x)$ is differentiable and $v'(x)$ is Lipschitz continuous. See \cite{Magnus95,ChenIsmail97,WvA18} for more details.

To establish ODEs or PDEs to characterize Hankel determinants, researchers make use of $(S_1),(S_2), (S_2')$ and the orthogonality relation \eqref{dp}: By computing $A_n$ and $B_n$ where auxiliary quantities are usually introduced, and substituting them into $(S_1)$, $(S_2)$ and $(S_2')$, one expresses the recurrence coefficients and $\textbf{p}(n)$ in terms of the auxiliary quantities which are shown to satisfy a system of difference equations that can be iterated in $n$; by differentiating the orthogonality condition (i.e. \eqref{dp} for our problem) w.r.t. the variable introduced in the weight function, one gets differential relations; combining all the results together, one derives Riccati equations for the auxiliary quantities from which ODEs or PDEs follow; with the help of the relation $D_n=\prod_{j=0}^{n-1}h_j$, one connects the logarithmic derivative of the Hankel determinant $D_n$ with the auxiliary quantities and $\textbf{p}(n)$, and finally establishes ODE or PDE for it.

We call the above derivation scheme the ladder operator approach. As a powerful research tool, it has been widely applied to study quantities arising from Hermitian random matrices- particularly those expressible as Hankel determinants (often up to a multiplicative constant), for example, partition functions, gap probabilities, moment generating functions of linear statistics. Among these studies, the Laguerre-type unitary ensembles defined by \eqref{wL} with $\lambda>0$, and the standard and shifted Jacobi-type unitary ensembles governed by \eqref{wJ}-\eqref{wJS} with $\alpha,\beta>0$ have drawn considerable attention from researchers, for example,
\begin{itemize}
\item [(i)] the semi-classical Laguerre  and Jacobi weights with $w_0(x)=(x-t)^{\gamma}$ \cite{ChenMcKay12,DZ10} and $w_0(x)={\rm e}^{-tx}$ \cite{BCE10};
\item [(ii)] the singularly perturbed Laguerre and Jacobi weights with $w_0(x)={\rm e}^{-t/x}$ \cite{ChenIts2010,CD10};
\item [(iii)] the classical Jacobi weight $(1-x)^{\alpha}(1+x)^{\beta}$ \cite{ChenIsmail05} and the symmetric Jacobi weights \cite{BCH15,MinChen22,MinChenNPB20,MinChen21}.
\end{itemize}
The reason why $\lambda,\alpha,\beta$ were restricted to positive values in these works is that the derivation of the expressions for $A_n$ and $B_n$ requires the weight function to vanish at the endpoints of the domain.

Recently, by utilizing the orthogonality relation \eqref{dp} and the Christoffel-Darboux formula \eqref{cd}, Min and Fang \cite{MinFang25} refined the derivation of the ladder operators for the Laguerre-type orthogonal polynomials from $\lambda>0$ to $\lambda>-1$, and the three compatibility conditions $(S_1),(S_2),$ and $(S_2')$ were proven to still hold. In this paper, we present an alternative deduction by making use of the Riemann-Hilbert problem satisfied by the orthogonal polynomials. Moreover, we employ these two methods to formulate the ladder operators for the standard and shifted Jacobi-type orthogonal polynomials with $\alpha,\beta>-1$. We also present the ladder operators for these three families of orthogonal polynomials exhibiting Fisher-Hartwig singularities of jump or root type. In addition, when $\lambda,\alpha,\beta$ are constrained to be positive, our results align with those in previous work. Through representative examples, we verify our findings and generalize the existing conclusions established by using the ladder operator approach from $\lambda,\alpha,\beta>0$ to $\lambda,\alpha,\beta>-1$.

This paper is organized as follows. Section 2 addresses the Riemann-Hilbert problem for monic orthogonal polynomials and the properties of the Cauchy transform. These tools are then applied in Sections 3-5 to derive ladder operators for the Laguerre-type, Jacobi-type, and shifted Jacobi-type orthogonal polynomials. Illustrative examples are presented to validate our results and demonstrate the generalization of the conclusions in prior studies. In addition, with the help of properties \eqref{3r}-\eqref{cd}, we provide in Section 4 an alternative proof of the ladder operators for Jacobi-type orthogonal polynomials, by expressing $(1-z^2)P_n'(z)$ as a linear combination of $\{P_k(z)\}_{k=1}^{n+1}$.

\section{Riemann-Hilbert Problem and Cauchy Transform}
In this section, we present the Riemann-Hilbert problem (RHP) for the monic orthogonal polynomials associated with \eqref{wL}-\eqref{wJS}, and state the properties of the Cauchy transform.

Define the $2\times2$ matrix-valued function $Y(z):\mathbb{C} \rightarrow \mathbb{C} \setminus I$ by \begin{equation}\label{defY}
	Y(z) := \begin{pmatrix}
		P_n(z) & \frac{1}{2\pi i} \displaystyle\int_{I}\frac{P_n(x)w(x)}{x-z} dx \\
		\frac{-2\pi i}{h_{n-1}} P_{n-1}(z) & \frac{-1}{h_{n-1}}\displaystyle\int_{I}\frac{P_{n-1}(x)w(x)}{x-z}dx \end{pmatrix},
\end{equation}
where the weight function $w(x)=w_{_L}(x),w_{_J}(x)$ and $w_{_{JS}}(x)$ with $I=[0,+\infty),[-1,1]$ and $[0,1]$ respectively, and $\{P_n(x)\}$ are the associated monic orthogonal polynomials determined by \eqref{mp}-\eqref{dp}.
According to the standard theory in \cite{FIK}, we know that $Y(z)$ satisfies the following three conditions:	
\begin{itemize}
	\item [(i)]$Y$ is analytic on
	$\mathbb{C} \setminus I$;
	\item[(ii)] $Y$ satisfies the jump condition
	\[Y_+(x)=Y_-(x)\left( \begin{array}{cc}
		1 & w(x) \\
		0 & 1
	\end{array} \right),\qquad x\in I^{\circ},\]
where $I^{\circ}=(0,+\infty),(-1,1)$ and $(0,1)$ for $w(x)=w_{_L}(x),w_{_J}(x)$ and $w_{_{JS}}(x)$\\ respectively;
	\item[(iii)] $Y$ has the asymptotic behavior as $z\rightarrow \infty$
	$$Y(z)=\left( I+O\left( \frac{1}{z}\right) \right) \left( \begin{array}{cc}
		z^n & 0 \\
		0 & z^{-n}
	\end{array} \right).$$
\end{itemize}
The above criteria define a standard RHP, whereas a concrete weight function necessitates supplementary hypotheses to guarantee that $Y(z)$ is the unique solution of the RHP. Refer to \cite[Sections 22.4-22.5]{Ismail} for the RHPs induced by the Laguerre weight $x^{\lambda}\e^{-x}$ and the Jacobi weight $(1-x)^{\alpha}(1+x)^{\beta}$, with $\{P_n(x)\}$ denoting the monic Laguerre and Jacobi polynomials respectively. See also \cite[Section 3.2]{Deift} and \cite{ChenIts2010,WuXu21, WangFan}.

Denote the Cauchy transform of a function $f$ defined on $I$ by
	\begin{equation}\label{defC}		 C(f):=\int_{I}\frac{f(x)}{x-z}dx.
		\end{equation}
The following property, stated in \cite[p.55-p.56]{WvA18} without a proof, will be crucial in our subsequent derivations.
\begin{lemma}
If
\begin{align}\label{cond}
\int_{I}x^{k}f(x)dx=0,\quad\qquad k=0,1,\cdots,n-1,
\end{align}
then, for every polynomial $Q_{m}$ of degree $m (\le n)$, one has
	\begin{equation}\label{cpro}
	Q_{m}(z)C(f)=C(f Q_{m}),\quad\qquad 0\leq m\leq n,
	\end{equation}
so that
\begin{equation}\label{Cpn}
Q_{m}(z)C(P_nw)=C(Q_mP_{n}w),\quad\qquad 0\leq m\leq n.
\end{equation}

\end{lemma}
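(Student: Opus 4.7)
The approach relies on a simple polynomial decomposition. For any polynomial $Q_m$ of degree $m$, the quotient $R_{m-1}(x;z) := (Q_m(x) - Q_m(z))/(x-z)$ is a polynomial in $x$ of degree $m-1$ whose coefficients depend polynomially on $z$. Rearranging gives the elementary identity
$$\frac{f(x)Q_m(x)}{x-z} = Q_m(z)\,\frac{f(x)}{x-z} + f(x)\,R_{m-1}(x;z),$$
valid for $z \notin I$. My plan is to integrate this identity over $I$.

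The left-hand side yields $C(fQ_m)$ and the first term on the right yields $Q_m(z)\,C(f)$. For the remaining term I would expand $R_{m-1}(x;z) = \sum_{k=0}^{m-1} c_k(z)\,x^k$ with $c_k(z)$ polynomial in $z$, and then invoke the hypothesis \eqref{cond} together with $m-1 \le n-1$ to conclude
$$\int_{I} f(x)\,R_{m-1}(x;z)\,dx \;=\; \sum_{k=0}^{m-1} c_k(z)\int_{I} x^{k} f(x)\,dx \;=\; 0.$$
This establishes \eqref{cpro}. The consequence \eqref{Cpn} is then immediate: the orthogonality relation \eqref{or-1} is exactly the hypothesis \eqref{cond} with $f = P_n w$, so applying \eqref{cpro} to $P_n w$ yields \eqref{Cpn}.

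The main obstacle is not algebraic but analytic, namely to confirm that each integral above converges and that the term-by-term splitting is legitimate. For the Jacobi-type and shifted Jacobi-type weights on the bounded intervals $[-1,1]$ and $[0,1]$, this is automatic from the integrability assumptions on $w$ imposed in Section \ref{section1}. For the Laguerre-type weight on $[0,+\infty)$, one needs the polynomial-decay hypothesis $\lim_{x\to+\infty} w_{_L}(x)\pi(x) = 0$ from Section \ref{section1}, which ensures that the moments $\int_{I} x^{k} P_n(x)\, w_{_L}(x)\,dx$ exist and that the Cauchy transforms $C(P_n w_{_L})$ and $C(Q_m P_n w_{_L})$ are well defined for $z \in \mathbb{C}\setminus I$. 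Once these routine analytic checks are handled, the proof reduces to the one-line algebraic decomposition displayed above.
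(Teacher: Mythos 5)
Your proposal is correct and follows essentially the same route as the paper: both rest on the identity $C(fQ_m)-Q_m(z)C(f)=\int_I\frac{Q_m(x)-Q_m(z)}{x-z}f(x)\,dx$ and the observation that $\frac{Q_m(x)-Q_m(z)}{x-z}$ is a polynomial in $x$ of degree $m-1\le n-1$, so that the hypothesis \eqref{cond} annihilates the integral, after which \eqref{Cpn} follows from \eqref{or-1} with $f=P_nw$. Your additional remarks on convergence are sound and consistent with the standing assumptions of Section \ref{section1}, though the paper does not spell them out.
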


\begin{proof}
By definition \eqref{defC}, we have
\begin{align}\label{C1}
C(f Q_{m})-Q_{m}(z)C(f)=\int_{I}\frac{Q_m(x)-Q_m(z)}{x-z}f(x)dx.
\end{align}
Writing
 \[Q_m(y)=a_my^m+a_{m-1}y^{m-1}+\cdots+a_1y+Q_m(0),\qquad a_m\neq0,\]
we find, for $m\geq3$,
\begin{align*}
Q_m(x)-Q_m(z)=&a_m\left(x^m-z^m\right)+a_{m-1}(x^{m-1}-z^{m-1})+\cdots+a_1(x-z)\\
=&(x-z)\left(a_mx^{m-1}+(a_mz+a_{m-1})x^{m-2}+\cdots+a_1\right),
\end{align*}
where the second equality is due to
\[y^k-z^k=(y-z)(y^{k-1}+y^{k-2}z+\cdots+yz^{k-2}+z^{k-1}),\qquad k=3,4,\cdots.\]
It follows that
\[\frac{Q_m(x)-Q_m(z)}{x-z}=a_mx^{m-1}+(a_mz+a_{m-1})x^{m-2}+\cdots+a_1.\]
Plugging it back into \eqref{C1}, in view of \eqref{cond}, we are led to \eqref{cpro}. We readily see that \eqref{cpro} is also valid for $m=0,1,2$.

From \eqref{or-1}, we get
$\int_I x^kP_n(x)w(x)dx=0$ for $k=0,1,\cdots,n-1.$
Hence, according to \eqref{cond}-\eqref{cpro}, we come to \eqref{Cpn}.
\end{proof}

With the notation \eqref{defC}, $Y(z)$ defined by \eqref{defY} can be written as
\begin{equation}\label{ymatrix}
	Y(z) = \begin{pmatrix}
		P_n(z) & \frac{1}{2\pi i} C(P_nw) \\
		\frac{-2\pi i}{h_{n-1}} P_{n-1}(z) & \frac{-1}{h_{n-1}}C(P_{n-1}w)  \\
\end{pmatrix}.
\end{equation}
According to the RHP satisfied by $Y(z)$, one gets $\det Y \equiv 1$ \cite[Section 4.2]{WvA18}, which yields
\begin{align}\label{I1}
-P_n(z)C(P_{n-1}w)+P_{n-1}(z)C(P_nw) =h_{n-1},
\end{align}
and
\begin{equation}
	\label{y-1}
	Y^{-1}(z) = \begin{pmatrix}		 \frac{-1}{h_{n-1}}C(P_{n-1}w) & -\frac{1}{2\pi i} C(P_nw)\\
\frac{2\pi i}{h_{n-1}} P_{n-1}(z)  & P_n(z)  \\
\end{pmatrix}.	
\end{equation}
Define $R(z):=Y'(z)\cdot Y^{-1}(z)$. It follows from \eqref{ymatrix} and \eqref{y-1} that
\begin{align}\label{rmatrixc}
R(z)=\begin{pmatrix}			 \frac{1}{h_{n-1}}(-P'_nC(P_{n-1}w)+P_{n-1}C(P_nw)') & \frac{1}{2\pi i}(-P'_nC(P_nw)+P_nC(P_nw)')  \\
\frac{2\pi i}{h_{n-1}^{2}}(P'_{n-1}C(P_{n-1}w)-P_{n-1}C(P_{n-1}w)') & \frac{1}{h_{n-1}}(P'_{n-1}C(P_nw)-P_nC(P_{n-1}w)')
\end{pmatrix},
\end{align}
where $'$ denotes $d/dz$. Moreover, since $Y'(z)=R(z)\cdot Y(z)$, we have
	\begin{equation}\label{p'nyr}		 P'_n(z)=\left(Y'(z)\right)_{1,1}=R_{1,1}P_n(z)-\frac{2\pi i}{h_{n-1}}R_{1,2}P_{n-1}(z),
	\end{equation}
	\begin{equation}\label{pn-1'y}		 P_{n-1}'(z)=-\frac{h_{n-1}}{2\pi i}\left(Y'(z)\right)_{2,1}=-\frac{h_{n-1}}{2\pi i}R_{2,1}P_n(z)+R_{2,2}P_{n-1}(z),
	\end{equation}
where $R_{i,j}:=\left(R(z)\right)_{i,j}$ for $i,j=1,2$. Using \eqref{rmatrixc} combined with \eqref{Cpn} to derive the expressions for $\{R_{1,1}, R_{1,2}, R_{2,1},R_{2,2}\}$, and substituting them into \eqref{p'nyr}-\eqref{pn-1'y}, we finally come to the ladder operators satisfied by $P_n(z)$.

The approach outlined above was employed in \cite{WvA18} to establish the ladder operators for monic orthogonal polynomials associated with a weight function that vanishes at the endpoints of its domain. Obviously, their derivation is not applicable to our problem. In addition, in \cite{ChenIts2010}, the ladder operators for the monic polynomials orthogonal w.r.t. $x^{\alpha}\e^{-x-s/x}, \alpha,s>0$ was deduced from the corresponding RHP. The authors transformed $Y(z)$ into $\Psi(z)$ to achieve a constant jump matrix for the latter. By using the behavior of $\Psi(z)$ as $z\rightarrow0$ and $z\rightarrow\infty$, the Lax pair $A(z):=\Psi'(z)\Psi^{-1}(z)$ was obtained, from which the ladder operators followed. Their method is clearly different from ours, since we do not need to make transformation or consider the specific form of $Y(z)$ as $z\rightarrow\infty$. Actually, in this paper, the behavior of $Y(z)$ at $\infty$, given by Condition (iii) of the RHP, is used only to establish $\det Y(z)\equiv1$ which is essential to derive \eqref{I1}-\eqref{y-1}.

\section{Ladder Operators for Laguerre-type Orthogonal Polynomials with $\lambda>-1$}
It was shown in \cite{MinFang25} that the monic Laguerre-type polynomials orthogonal w.r.t. \eqref{wL}, i.e. $w_{_L}(x)=x^\lambda w_0(x),~ x\in [0, +\infty),~ \lambda>-1,$ satisfy a pair of ladder operators, and the compatibility conditions $(S_1), (S_2)$ and $(S_2')$ presented in Introduction are still valid. We restate the results in the following theorem.
\begin{theorem}\label{main}
The ladder operators below hold for the monic Laguerre-type orthogonal polynomials:
\begin{align}
P_n'(z)=&-B_n(z)P_n(z)+\beta_nA_n(z)P_{n-1}(z),\label{lo}\\
P_{n-1}'(z)=&\left(B_n(z)+v_{_L}'(z)\right)P_{n-1}(z)-A_{n-1}(z)P_n(z),\label{ro}
\end{align}
with $n\geq0$, where $A_n(z)$ and $B_n(z)$ are defined by
     \begin{align}\label{AN}	 A_n(z):=\frac{1}{z}\cdot\frac{1}{h_{n}}\int_{0}^{+\infty}\frac{zv_{_L}'(z)-xv_{_L}'(x)}{z-x}P_n^{2}(x)w_{_L}(x)dx,\qquad n\geq0,
     \end{align}
     \begin{align}\label{BN}
      B_n(z):=\frac{1}{z}\left(\frac{1}{h_{n-1}}\int_{0}^{+\infty}\frac{zv_{_L}'(z)-xv_{_L}'(x)}{z-x}P_{n-1}(x)P_n(x)w_{_L}(x)dx
     -n\right),\qquad n\geq1,
     \end{align}
with $A_{-1}(z)=B_0(z):=0$. Here $v_{_L}(x):=-\ln w_{_L}(x)$ with $w_{_L}(x)$ given by \eqref{wL}. Moreover, $A_n(z)$ and $B_n(z)$ satisfy $(S_1), (S_2)$ and $(S_2')$.
\end{theorem}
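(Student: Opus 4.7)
The plan is to carry out the Riemann--Hilbert derivation outlined at the end of Section~2: compute the entries of $R(z)=Y'(z)Y^{-1}(z)$ via \eqref{rmatrixc}, simplify them into the form of \eqref{AN} and \eqref{BN} using Lemma~2.1, and substitute into \eqref{p'nyr} and \eqref{pn-1'y} to obtain the ladder operators \eqref{lo}, \eqref{ro}; then verify the three compatibility conditions algebraically. The key point that upgrades the classical derivation (valid only for $\lambda>0$) to the full range $\lambda>-1$ is to integrate by parts against $xw_{_L}(x)=x^{\lambda+1}w_0(x)$ rather than $w_{_L}(x)$: the factor $x$ kills the boundary contribution at the origin for every $\lambda>-1$, and this is precisely what produces the prefactor $1/z$ in both \eqref{AN} and \eqref{BN}.

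First, I would expand each $P_j(z)C(P_kw_{_L})'(z)$ appearing in the entries of $R$ by splitting $P_j(z)P_k(x)=P_j(x)P_k(x)+[P_j(z)-P_j(x)]P_k(x)$ and applying Lemma~2.1 to the divided-difference polynomial $(P_j(x)-P_j(z))/(x-z)$, whose degree in $x$ is at most $j-1\leq n-1$. This collapses the second piece into $-P_j'(z)C(P_kw_{_L})(z)$ and leaves the single second-order integral $\int_0^{+\infty}P_j(x)P_k(x)w_{_L}(x)/(x-z)^2\,dx$. Using $x=(x-z)+z$ to rewrite $z$ times this as $\int_0^{+\infty}xP_j(x)P_k(x)w_{_L}(x)/(x-z)^2\,dx - C(P_jP_kw_{_L})(z)$, then performing integration by parts against $xw_{_L}$ (justified for $\lambda>-1$) and expanding $(xP_jP_kw_{_L})'(x)=w_{_L}(x)[(1-xv_{_L}'(x))P_j(x)P_k(x)+x(P_jP_k)'(x)]$, another round of Lemma~2.1 applied to $x(P_jP_k)'(x)$ (decomposed via $xP_n'(x)=nP_n(x)+(\text{lower order})$ and the three-term recurrence \eqref{3r}) cancels the $P_j'(z)C(P_kw_{_L})$ contributions between the two stages. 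What survives is $\frac{1}{z}\int_0^{+\infty}xv_{_L}'(x)P_j(x)P_k(x)w_{_L}(x)/(x-z)\,dx$ together with a residual $\frac{1}{z}C(P_jP_kw_{_L})(z)$.

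The final step is to rewrite $\frac{xv_{_L}'(x)}{x-z}=\frac{zv_{_L}'(z)}{x-z}-\frac{zv_{_L}'(z)-xv_{_L}'(x)}{z-x}$, which converts the above into $\frac{1}{z}\int_0^{+\infty}\frac{zv_{_L}'(z)-xv_{_L}'(x)}{z-x}P_j(x)P_k(x)w_{_L}(x)\,dx$ plus a $v_{_L}'(z)C(P_jP_kw_{_L})(z)$ piece that exactly cancels the leftover $\frac{1}{z}C(P_jP_kw_{_L})(z)$ after multiplication by $z$. Setting $(j,k)=(n,n)$ yields \eqref{AN}, while $(j,k)=(n-1,n)$ combined with the three-term-recurrence identity $\int_0^{+\infty}xP_{n-1}(x)P_n(x)w_{_L}(x)\,dx=h_n$ yields \eqref{BN}; the constant $-n$ in \eqref{BN} emerges from the leading contribution of $x(P_{n-1}P_n)'(x)$ in the second round of Lemma~2.1. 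Substituting the resulting $R_{i,j}$ into \eqref{p'nyr} and \eqref{pn-1'y} delivers \eqref{lo} and \eqref{ro}, the additional $v_{_L}'(z)$ in \eqref{ro} appearing as the single uncancelled term from the $-xv_{_L}'(x)$ contribution in the derivation of $R_{2,2}$. The three compatibility conditions then follow algebraically: $(S_1)$ by eliminating $P_{n+1}$ via \eqref{3r} between \eqref{lo} shifted to $n+1$ and \eqref{ro}; $(S_2)$ by taking the $n$-difference of $(S_1)$ and using the ladder operators again; and $(S_2')$ by forming $P_n'(z)P_{n-1}'(z)$ through both ladder operators and invoking the Christoffel--Darboux formula \eqref{cd}.

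The main obstacle is the bookkeeping in the second paragraph: tracking all the $P_j'(z)C(P_kw_{_L})$, $C(P_jP_kw_{_L})$ and $v_{_L}'(z)$-type terms generated by the successive applications of Lemma~2.1 and the $xw_{_L}$ integration by parts, and verifying that everything except the kernel $(zv_{_L}'(z)-xv_{_L}'(x))/(z-x)$ cancels. In particular, producing the constant $-n$ in \eqref{BN} with the correct sign requires a careful polynomial reduction of $x(P_{n-1}P_n)'(x)$ via the three-term recurrence and the monic normalization of $P_n$, and producing the explicit $v_{_L}'(z)$ in \eqref{ro} requires isolating the single uncancelled contribution in the $(j,k)=(n-1,n-1)$ calculation. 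These are not conceptual difficulties but demand consistent choices of the polynomial $Q_m$ at each invocation of Lemma~2.1 so that the degree hypothesis $\deg Q_m\leq n$ is respected.
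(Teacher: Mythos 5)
Your proposal follows essentially the same route as the paper: the Riemann--Hilbert derivation via $R(z)=Y'(z)Y^{-1}(z)$, with the decisive step being the integration by parts against $x\,w_{_L}(x)=x^{\lambda+1}w_0(x)$ (the paper realizes this by writing $\frac{dx}{(x-z)^2}=-d\bigl(\frac{x}{z(x-z)}\bigr)$ in Lemma 3.1), which kills the boundary term at the origin for all $\lambda>-1$ and produces the $1/z$ prefactor, followed by repeated use of Lemma 2.1 and the kernel rewriting $\frac{xv_{_L}'(x)}{x-z}=\frac{zv_{_L}'(z)}{x-z}-\frac{zv_{_L}'(z)-xv_{_L}'(x)}{z-x}$ with cancellation of the $v_{_L}'(z)C(\cdot)$ pieces. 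The only cosmetic deviation is that you obtain $(S_2')$ directly via Christoffel--Darboux rather than by combining $(S_1)$ and $(S_2)$ as the paper does (citing van Assche), but both are standard and correct.
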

      	
In \cite{MinFang25}, the lowering operator \eqref{lo} was established by expressing $zP_n'(z)$ in terms of $\{P_j\}_{j=0}^n$, with the help of the definition and properties of $\{P_j\}$, namely, \eqref{mp}-\eqref{cd}. By using the definitions of $A_n$ and $B_n$, in view of the recurrence relation \eqref{3r}, the compatibility conditions $(S_1)$ and $(S_2)$ were obtained, a combination of which yielded $(S_2')$. The raising operator \eqref{ro} was a result of $(S_1)$ and the lowering operator.

\begin{remark}
$A_n$ defined by \eqref{AN} can also be written as
\begin{align}		 A_n(z)=&\frac{1}{h_n}\int_{0}^{+\infty}\frac{v_{_L}'(z)-\frac{x}{z}v_{_L}'(x)}{z-x}P_n^{2}(x)w_{_L}(x)dx\nonumber\\		 =&\frac{1}{h_n}\int_{0}^{+\infty}\frac{v_{_L}'(z)-v_{_L}'(x)}{z-x}P_n^{2}(x)w_{_L}(x)dx+\frac{1}{z}\cdot \frac{1}{h_n}\int_{0}^{+\infty}P_n^{2}(x)w_{_L}(x)v_{_L}'(x)dx.\label{An-1}
\end{align}
Noting that $w_{_L}(x)v_{_L}'(x)dx=-dw_{_L}(x)$ and $w_{_L}(0)=w_{_L}(+\infty)=0$ when $\lambda>0$, through integration by parts, we find for $\lambda>0$
\begin{align*}
\frac{1}{h_n}\int_{0}^{+\infty}P_n^{2}(x)w_{_L}(x)v_{_L}'(x)dx
=&-\left.\frac{P_n^{2}(x)}{h_n}w_{_L}(x)\right|_{x=0}^{x=+\infty}+\frac{1}{h_n}\int_0^{+\infty}2 P_n'(x)P_n(x)w_{_L}(x)dx\\
=&0,
\end{align*}
where we make use of Assumption \ref{ass:wL} given in Introduction and \eqref{or-1} to get the second identity. Consequently, \eqref{An-1} {\emph(}equivalently, \eqref{AN}{\emph)} reduces to \eqref{a-n}. Similarly, we can show that, when $\lambda>0$, $B_n$ given by \eqref{BN} becomes \eqref{b-n}. Therefore, we conclude that, when $\lambda>0$, $A_n$ and $B_n$ defined by \eqref{AN}-\eqref{BN} are consistent with \eqref{a-n}-\eqref{b-n}. This consistency was also pointed out in \cite[Remark 1]{MinFang25}.

{\sloppy The above conclusion indicates that when $\lambda>0$ we can make use of either \eqref{AN}-\eqref{BN} or \eqref{a-n}-\eqref{b-n} to compute $A_n$ and $B_n$, while for $-1<\lambda\leq0$ we can only use \eqref{AN}-\eqref{BN}. Nevertheless, we observe that in previous work where $\lambda>0$, after $A_n$ and $B_n$ were computed by using \eqref{a-n}-\eqref{b-n}, an integration by parts was usually applied. This operation leads to the fact that the obtained expressions are identical to those deduced by using \eqref{AN}-\eqref{BN} which are valid for $\lambda>-1$. Refer to \eqref{An-1}. Consequently, the results established by substituting the expressions of $A_n$ and $B_n$ into $(S_1), (S_2), (S_2')$ in those studies for $\lambda>0$ can be generalized to $\lambda>-1$. The above analysis will be validated in Section \ref{exL}.}
\end{remark}

In this section, we will present an alternative derivation of Theorem \ref{main} and use it in several examples to extend the validity of  existing results from $\lambda>0$ to $\lambda>-1$.
	
\subsection{Proof of the ladder operators}
We apply \eqref{rmatrixc}-\eqref{pn-1'y} to deduce the lowering and raising operators, with the aid of \eqref{Cpn}. Combing them together produces $(S_1)$ and $(S_2)$ which further lead to $(S_2')$.

From \eqref{p'nyr}-\eqref{pn-1'y}, we know that the expressions of the four elements of $R(z)$ are essential to derive the ladder operators. Hence, we need the following lemma.
\begin{lemma}\label{lemmal}
$C(P_{n-1}P_nw_{_L})'$  and  $ C(P_{n}^{2}w_{_L})'$ can be expressed as follows
\begin{equation}\label{lemma21}	
\begin{aligned}		 C(P_{n-1}P_{n}w_{_L})'=&C(P_{n-1}'P_nw_{_L})+C(P_{n-1}P_n'w_{_L})\\			 &+\frac{1}{z}\left(-\int_{0}^{+\infty}\frac{xv_{_L}'(x)}{x-z}P_{n-1}(x)P_{n}(x)w_{_L}(x)dx+n\cdot h_{n-1}\right),
\end{aligned}
\end{equation}
 \begin{align}\label{CPn2'}
C(P_{n}^{2}w_{_L})'	 =&2C(P'_nP_nw_{_L})-\frac{1}{z}\int_{0}^{+\infty}\frac{xv'_{_L}(x)}{x-z}P_n^{2}(x)w_{_L}(x)dx,
\end{align}
where $v_{_L}(x)=-\ln w_{_L}(x)$ with $w_{_L}(x)$ given by \eqref{wL}.
\end{lemma}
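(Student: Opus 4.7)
My plan is to prove both identities simultaneously by first establishing a general formula
\[
C(fw_{_L})' = C(f'w_{_L}) + \frac{1}{z}\left(\int_0^{+\infty} f'(x)w_{_L}(x)\,dx - \int_0^{+\infty}\frac{xv_{_L}'(x)f(x)w_{_L}(x)}{x-z}\,dx\right)
\]
valid for any polynomial $f$, and then specializing to $f=P_n^2$ and $f=P_{n-1}P_n$. The guiding observation is that a direct integration by parts in $C(fw_{_L})' = \int_0^{+\infty} f(x)w_{_L}(x)/(x-z)^2\,dx$ produces a boundary contribution at $x=0$ involving $w_{_L}(0)$, which is ill-defined when $-1<\lambda\le 0$. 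The remedy is to carry the extra factor $x$ throughout so that one works with $xw_{_L}(x)=x^{\lambda+1}w_0(x)$, which vanishes at $0$ for every $\lambda>-1$ and at $+\infty$ by the standing assumption on $w_{_L}$.

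The first step is the elementary identity $xv_{_L}'(x)w_{_L}(x) = w_{_L}(x) - (xw_{_L}(x))'$, which is immediate from $w_{_L}' = -v_{_L}'w_{_L}$. Substituting it into the target integral gives
\[
\int_0^{+\infty}\frac{xv_{_L}'(x)f(x)w_{_L}(x)}{x-z}\,dx = C(fw_{_L}) - \int_0^{+\infty}\frac{f(x)(xw_{_L}(x))'}{x-z}\,dx.
\]
I would then integrate the remaining integral by parts using $u=f/(x-z)$ and $dv=(xw_{_L})'\,dx$. Because $xw_{_L}$ vanishes at both endpoints, the boundary contribution is zero, leaving integrals of the form $\int \frac{xf'w_{_L}}{x-z}\,dx$ and $\int \frac{xfw_{_L}}{(x-z)^2}\,dx$. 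Writing $x=(x-z)+z$ in each of these reduces everything to $\int f'w_{_L}\,dx$, $zC(f'w_{_L})$, $C(fw_{_L})$, and $zC(fw_{_L})'$, and a simple algebraic rearrangement isolates $C(fw_{_L})'$ and produces the general formula above.

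To conclude, I would specialize. For $f=P_n^2$, one has $f'=2P_nP_n'$ and $\int_0^{+\infty} f'w_{_L}\,dx = 2\int_0^{+\infty} P_nP_n'w_{_L}\,dx = 0$ by \eqref{or-1}, since $\deg P_n' = n-1$, which delivers \eqref{CPn2'}. For $f=P_{n-1}P_n$, the constant term becomes $\int_0^{+\infty}(P_{n-1}'P_n+P_{n-1}P_n')w_{_L}\,dx$; the first piece vanishes by orthogonality, while for the second, $P_n'-nP_{n-1}$ has degree at most $n-2$ and is hence orthogonal to $P_{n-1}$, so $\int_0^{+\infty} P_{n-1}P_n'w_{_L}\,dx = n\int_0^{+\infty} P_{n-1}^2 w_{_L}\,dx = nh_{n-1}$, yielding \eqref{lemma21}.

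The main obstacle is not technical but conceptual: recognizing that the factor $x$ appearing in the numerators of \eqref{lemma21} and \eqref{CPn2'} is engineered precisely so that $xw_{_L}$ carries the vanishing boundary condition at $0$ that $w_{_L}$ itself loses for $\lambda\le 0$. Once this trick is in hand, the rest is a routine integration by parts combined with the standard orthogonality facts for monic $\{P_j\}$.
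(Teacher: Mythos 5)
Your proposal is correct and rests on exactly the same key idea as the paper's proof: a single integration by parts in which the boundary term carries the factor $xw_{_L}(x)=x^{\lambda+1}w_0(x)$, which vanishes at both endpoints for all $\lambda>-1$, followed by the orthogonality facts $\int_0^{+\infty}P_nP_n'w_{_L}\,dx=0$, $\int_0^{+\infty}P_{n-1}'P_nw_{_L}\,dx=0$ and $\int_0^{+\infty}P_{n-1}P_n'w_{_L}\,dx=nh_{n-1}$. The only cosmetic differences are that you state a general formula for an arbitrary polynomial $f$ before specializing and run the integration by parts in the reverse direction (moving the derivative off $xw_{_L}$ rather than off the kernel $x/(z(x-z))$), which is an equivalent computation.
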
	
\begin{proof}
We only present the derivation of \eqref{lemma21} since \eqref{CPn2'} can be obtained via a similar argument.

From definition \eqref{defC}, it follows that		
\begin{align}
C(P_{n-1}P_{n}w_{_L})'=& \frac{d}{dz}\int_{0}^{+\infty}\frac {P_{n-1}(x)P_{n}(x)w_{_L}(x)}{x-z}dx\nonumber\\
=&\int_{0}^{+\infty}\frac{P_{n-1}(x)P_{n}(x)w_{_L}(x)}{(x-z)^{2}}dx\nonumber\\
=&-\int_{0}^{+\infty}P_{n-1}(x)P_{n}(x)w_{_L}(x)\,d\left(\frac{x}{z(x-z)}\right).\label{L-C-d}
\end{align}	
Through integration by parts, we get
\begin{align}		 C(P_{n-1}P_{n}w_{_L})'=&-\left.P_{n-1}(x)P_{n}(x)\frac{x w_{_L}(x)}{z(x-z)}\right|_{x=0}^{x=+\infty}
+\int_{0}^{+\infty}P'_{n-1}(x)P_{n}(x)w_{_L}(x)\frac{x dx}{z(x-z)}\nonumber\\			 &+\int_{0}^{+\infty}P_{n-1}(x)P'_{n}(x)w_{_L}(x)\frac{x dx}{z(x-z)}\nonumber\\
&-\int_{0}^{+\infty}\frac{x v_{_L}'(x)}{z(x-z)}P_{n-1}(x)P_{n}(x)w_{_L}(x)dx.\label{L-C-ibp}
\end{align}	
Now we analyze the first three terms on the r.h.s. one by one. Since $w_0(0)$ is bounded, as stated in Assumption \ref{ass:w0} from the Introduction, we have $\left.xw_{_L}(x)\right|_{x=0}=\left.x^{\lambda+1}w_0(x)\right|_{x=0}=0$ for $\lambda>-1$.  This fact together with Assumption \ref{ass:wL} indicates that the first term is zero. To continue, noting that
$\frac{x}{z(x-z)}=\frac{1}{x-z}+\frac{1}{z}$,
in view of \eqref{or-1}, we find that the second term reduces to $C(P'_{n-1}P_{n}w_{_L})$.
Similarly, on account of \eqref{or-2}, the third term becomes $C(P_{n-1}P'_{n}w_{_L})+\frac{n\cdot h_{n-1}}{z}$.
Integrating the above analysis, we  come to \eqref{lemma21}.
	\end{proof}	

{\sloppy \begin{remark}\label{L-R2}
From the above argument, we see that the construction of  the differential $d\big(\frac{x}{z(x-z)}\big)$  in  \eqref{L-C-d} is crucial for our derivation. It ensures the presence of $x$ in front of $w_{_{L}}(x)$ in the first term on the r.h.s. of \eqref{L-C-ibp}, which guarantees that this term is zero at $x=0$ for $\lambda>-1$.
When $\lambda>0$, the differential $d\left(\frac{1}{x-z}\right)$ was formed \cite{WvA18} instead of $d\left(\frac{x}{z(x-z)}\right)$ in \eqref{L-C-d}, namely,
\begin{align*}
C\left(P_{n-1}P_nw_{_L}\right)'=&\int_{0}^{+\infty}\frac{P_{n-1}(x)P_{n}(x)w_{_L}(x)}{(x-z)^{2}}dx
=-\int_{0}^{+\infty}P_{n-1}(x)P_{n}(x)w_{_L}(x)\,d\left(\frac{1}{x-z}\right)\\
=&-\left.P_{n-1}(x)P_{n}(x)\frac{ w_{_L}(x)}{x-z}\right|_{x=0}^{x=+\infty}+ C(P_{n-1}'P_nw_{_L})+C(P_{n-1}P_n'w_{_L})\\
&-\int_0^{+\infty}\frac{v'_{_L}(x)}{x-z}P_{n-1}(x)P_n(x)w_{_L}(x)dx.
\end{align*}
Note that $w_{_L}(x)$ evaluates to zero at $0$ when $\lambda>0$. The above expression and the one for $C(P_{n}^{2}w_{_L})'$ which can be obtained via a similar argument differ from \eqref{lemma21}-\eqref{CPn2'}, resulting in a different form of the four elements in $R(z)$ and consequently of $A_n$ and $B_n$. Refer to \cite[Section 4.3]{WvA18} for a detailed analysis.
\end{remark}}	

Now we proceed to compute $R(z)$ by using \eqref{rmatrixc} and the property \eqref{Cpn}.
\begin{lemma}\label{R-L}
 The quantities $R_{1,1}, R_{1,2}, R_{2,1}$ and $ R_{2,2}$ are given by
	\begin{align}			 R_{1,1}(z)=&-R_{2,2}(z)=-\frac{1}{z}\left(\frac{1}{h_{n-1}}\int_{0}^{+\infty}\frac{xv_{_L}'(x)}{x-z}P_{n-1}(x)P_n(x)w_{_L}(x)dx-n\right),\label{R11}\\			 R_{1,2}(z)=&-\frac{1}{2\pi i}\cdot\frac{1}{z}\int_{0}^{+\infty}\frac{xv_{_L}'(x)}{x-z}P_n^{2}(x)w_{_L}(x)dx,\label{R12}\\
R_{2,1}(z)=&\frac{2\pi i}{h_{n-1}^{2}}\cdot \frac{1}{z}\int_{0}^{+\infty}\frac{xv_{_L}'(x)}{x-z}P_{n-1}^{2}(x)w_{_L}(x)dx.\label{r21}		 \end{align}
	\end{lemma}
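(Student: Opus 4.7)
The plan is to evaluate each entry of $R(z)=Y'(z)Y^{-1}(z)$ in \eqref{rmatrixc} by substituting Lemma~\ref{lemmal} for the derivatives of the Cauchy transforms and then repeatedly applying the commutation property \eqref{Cpn}, together with orthogonality, to collapse ``polynomial $\times$ Cauchy transform'' products into single Cauchy transforms that cancel in pairs. Only the $\frac{1}{z}$-weighted integrals from Lemma~\ref{lemmal} should remain, and those will be exactly the quantities appearing in \eqref{R11}--\eqref{r21}.

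For $R_{1,2}$, I would use \eqref{Cpn} with $Q_m=P_n$ to write $P_n(z)C(P_nw_{_L})=C(P_n^2w_{_L})$, differentiate in $z$ to obtain $P_n(z)C(P_nw_{_L})'=C(P_n^2w_{_L})'-P_n'(z)C(P_nw_{_L})$, plug in \eqref{CPn2'}, and then rewrite $P_n'(z)C(P_nw_{_L})=C(P_n'P_nw_{_L})$ via \eqref{Cpn} (valid since $\deg P_n'=n-1\le n$); the two copies of $C(P_n'P_nw_{_L})$ cancel and \eqref{R12} falls out. For $R_{1,1}$, the analogous setup starts from $P_{n-1}(z)C(P_nw_{_L})=C(P_{n-1}P_nw_{_L})$ (here $m=n-1\le n$ in \eqref{Cpn}); after differentiating and substituting \eqref{lemma21}, one rewrites $P_n'(z)C(P_{n-1}w_{_L})=C(P_n'P_{n-1}w_{_L})$ and $P_{n-1}'(z)C(P_nw_{_L})=C(P_{n-1}'P_nw_{_L})$ by \eqref{Cpn}; the four ``mixed'' Cauchy transforms then cancel in two pairs, leaving exactly the $\frac{1}{z}$-bracket of \eqref{lemma21} and hence \eqref{R11}.

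For $R_{2,1}$, I first need the $(n-1)$-analogue of \eqref{CPn2'},
\[
C(P_{n-1}^2 w_{_L})' \;=\; 2\,C(P_{n-1}'P_{n-1}w_{_L}) - \frac{1}{z}\int_{0}^{+\infty}\frac{xv_{_L}'(x)}{x-z}P_{n-1}^2(x)w_{_L}(x)\,dx,
\]
which is proved by the same integration-by-parts argument as Lemma~\ref{lemmal} (the boundary term $[P_{n-1}^2(x)\,x w_{_L}(x)/(z(x-z))]_0^{+\infty}$ still vanishes for $\lambda>-1$, and $\int_0^{+\infty} P_{n-1}'(x)P_{n-1}(x)w_{_L}(x)\,dx=0$ by \eqref{or-1}). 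Combined with $P_{n-1}(z)C(P_{n-1}w_{_L})'=C(P_{n-1}^2w_{_L})'-P_{n-1}'(z)C(P_{n-1}w_{_L})$ and $P_{n-1}'(z)C(P_{n-1}w_{_L})=C(P_{n-1}'P_{n-1}w_{_L})$ (valid in \eqref{Cpn} since $\deg P_{n-1}'=n-2\le n-1$), the same cancellation mechanism produces \eqref{r21}. The identity $R_{2,2}=-R_{1,1}$ comes for free from $\det Y\equiv 1$: differentiating gives $\tr R=(\ln\det Y)'=0$. The main obstacle is purely bookkeeping --- ensuring that the degree bound in \eqref{Cpn} is actually met at every application (the tight case is $P_n'(z)C(P_{n-1}w_{_L})$, where $\deg P_n'=n-1$ exactly saturates the constraint $m\le n-1$) and tracking signs so that all four mixed Cauchy transforms in the $R_{1,1}$ computation cancel as claimed.
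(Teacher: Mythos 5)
Your proposal is correct and follows essentially the same route as the paper: express each entry of $R(z)$ via \eqref{rmatrixc}, use \eqref{Cpn} to convert polynomial-times-Cauchy-transform products into single Cauchy transforms, differentiate, substitute Lemma \ref{lemmal}, and let the mixed terms cancel, with $R_{2,2}=-R_{1,1}$ coming from $\operatorname{tr}R=0$. The only cosmetic difference is that for $R_{2,1}$ you rederive the $(n-1)$-analogue of \eqref{CPn2'} explicitly, whereas the paper simply observes that $\tfrac{h_{n-1}^2}{-2\pi i}R_{2,1}$ equals $2\pi i\,R_{1,2}$ with $n$ replaced by $n-1$; both amount to the same computation.
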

	
\begin{proof}
It is well-known the derivative of $\det Y(z)$ is given by \cite[(0.8.10.1)]{HJ}
\[\frac{d}{dz} \det Y(z)=\det Y(z)\cdot \tr\left(Y'(z)Y^{-1}(z)\right).\]
Since $\det Y(z)=1$ and $Y'(z)Y^{-1}(z)=R(z)$, it follows that $\tr(R(z))=0$, i.e. $R_{1,1}=-R_{2,2}$. From \eqref{rmatrixc}, $R_{1,1}(z)$ takes the form
\begin{align}\label{r11-1}			 R_{1,1}(z)=&\frac{1}{h_{n-1}}\left(-P_n'(z)C(P_{n-1}w_{_L})+P_{n-1}(z)C(P_nw_{_L})'\right).		 		 \end{align}	
According to \eqref{Cpn}, one has
\begin{align}	 P'_{n}(z)C(P_{n-1}w_{_{L}})=&C(P_{n-1}P'_{n}w_{_{L}}),\label{orthogonalc}	 \\
P'_{n-1}(z)C(P_{n}w_{_{L}})=&C(P'_{n-1}P_{n}w_{_{L}}),\label{DC-3}
\end{align}
	and
	\begin{equation*}	 P_{n-1}(z)C(P_{n}w_{_{L}})=C(P_{n-1}P_{n}w_{_{L}}).
	\end{equation*}
Differentiating both sides of the last w.r.t. $z$, in view of \eqref{DC-3}, we get
	\begin{equation*}	 P_{n-1}C(P_{n}w_{_{L}})'=C(P_{n-1}P_{n}w_{_{L}})'-C(P'_{n-1}P_{n}w_{_{L}}).
	\end{equation*}
Plugging it and \eqref{orthogonalc} into \eqref{r11-1} yields
\begin{align*}			 R_{1,1}=&\frac{1}{h_{n-1}}\left(C(P_{n-1}P_nw_{_L})'-C(P'_{n-1}P_nw_{_L})-C(P_{n-1}P_n'w_{_L})\right).
\end{align*}
Combining it with \eqref{lemma21} leads us to \eqref{R11}.
	
Expression \eqref{R12} can be derived similarly. Specifically, by \eqref{rmatrixc}, we have
\begin{align}\label{R12c}
R_{1,2}=&\frac{1}{2\pi i}\left(-P'_nC(P_nw_{_L})+P_nC(P_nw_{_L})'\right).
\end{align}
From \eqref{Cpn}, it follows that
\begin{align}
P_n'C(P_nw_{_L})=&C(P_n'P_nw_{_L}),\label{eq4}\\
P_nC(P_nw_{_L})=&C(P_{n}^{2}w_{_L}).\nonumber
\end{align}
We differentiate both sides of the latter w.r.t. $z$ and find, in light of \eqref{eq4},
	 \begin{equation}\label{eq6}	 	 P_nC(P_nw_{_L})'=C(P_{n}^{2}w_{_L})'-C(P'_nP_nw_{_L}).
	 \end{equation}
Inserting \eqref{eq4}-\eqref{eq6} into \eqref{R12c}, and substituting \eqref{CPn2'} into the resulting expression, we arrive at \eqref{R12}.

To continue, \eqref{rmatrixc} gives us
	\begin{equation*}
    R_{2,1}=\frac{-2\pi i}{h_{n-1}^{2}}\left(-P'_{n-1}C(P_{n-1}w_{_L})+P_{n-1}C(P_{n-1}w_{_L})'\right).		 \end{equation*}
Comparing it with \eqref{R12c}, we observe that $\frac{h_{n-1}^2}{-2\pi i}R_{2,1}$ equals $2\pi i R_{1,2}$ where $n$ should be replaced by $n-1$. Hence, \eqref{r21} follows from \eqref{R12}.
\end{proof}
	
We are now in a position to construct the lowering and raising operators for $P_n(z)$, given by \eqref{lo}-\eqref{ro}. By substituting \eqref{R11}-\eqref{R12} into \eqref{p'nyr},  we find
\begin{align}			
P'_n(z)=&R_{1,1}P_n(z)-\frac{2\pi i}{h_{n-1}}R_{1,2}P_{n-1}(z)\nonumber\\
=&-\frac{1}{z}\left(\frac{1}{h_{n-1}}\int_{0}^{+\infty}\frac{zv_{_L}'(z)-xv_{_L}'(x)}{z-x}P_{n-1}(x)P_n(x)w_{_L}(x)dx
			-n\right)\cdot P_n(z)\nonumber\\			 &+\frac{1}{z}\cdot\frac{1}{h_{n-1}}\int_{0}^{+\infty}\frac{zv_{_L}'(z)-xv_{_L}'(x)}{z-x}P_n^{2}(x)w_{_L}(x)dx\cdot P_{n-1}(z)\nonumber\\
&+\frac{v_{_L}'(z)}{h_{n-1}}\left[-P_n(z)C(P_{n-1}P_nw_{_L})+P_{n-1}(z)C(P_n^2w_{_L})\right].\label{Pn'1}
\end{align}	
According to \eqref{Cpn}, we have
\[C(P_{n-1}P_nw_{_L})=P_{n-1}C(P_nw_{_L}),\quad\qquad C(P_n^2w_{_L})=P_nC(P_nw_{_L}). \]
Combining them indicates that the term in the square bracket in \eqref{Pn'1} is zero. Noting that $\beta_n=h_n/h_{n-1}$, we finally come to the lowering operator \eqref{lo} with $A_n$ and $B_n$ given by \eqref{AN}-\eqref{BN}.
	
The raising operator can be derived following a similar reasoning. Indeed, by plugging the expressions of $R_{2,1}$ and $R_{2,2}$ given by \eqref{r21} and \eqref{R11} into \eqref{pn-1'y}, in view of the identity $C(P_{n-1}^2w_{_L})=P_{n-1}C(P_{n-1}w_{_L})$
which results from \eqref{Cpn}, and on account of \eqref{I1}, we arrive at \eqref{ro}.

\begin{remark}\label{lr-r}
The compatibility conditions $(S_1)$ and $(S_2)$ can be derived by using the ladder operators \eqref{lo}-\eqref{ro} and the recurrence relation \eqref{3r}.
A combination of $(S_1)$ and $(S_2)$ yields $(S_2')$. See \cite[Theorem 4.2]{WvA18} for a detailed derivation.
Although the expressions of $A_n$ and $B_n$ therein (c.f. \cite[(4.4)-(4.5)]{WvA18}), valid for Laguerre-type orthogonal polynomials with $\lambda>0$, differ from ours given by \eqref{AN}-\eqref{BN}, the proof therein still holds for our problem since it does not depend on the specific forms of $A_n$ and $B_n$.
\end{remark}

\subsection{Examples and extensions}\label{exL}
We first consider the monic Laguerre polynomials to demonstrate the validity of Theorem \ref{main}.
In this subsection, we assume $x\in[0,+\infty)$, unless otherwise specified.
		
\begin{exm}
For the classical Laguerre weight function, i.e. $w_{_L}(x)=x^{\lambda}\e^{-x}, \lambda>-1$, we have
\begin{equation*}
\frac{zv'_{_L}(z)-xv'_{_L}(x)}{z-x}=1.
\end{equation*}
Substituting it into \eqref{AN}-\eqref{BN} gives us
\begin{align*}
A_n(z)=\frac{1}{z},\qquad\qquad	
B_n(z)=-\frac{n}{z}.
\end{align*}
Inserting them into $(S_1)$ and $(S^{'}_2)$, we get
\begin{align*}
\alpha_n=2n+\lambda+1,\qquad\qquad
\beta_n=n(n+\lambda),
\end{align*}
so that the three-term recurrence relation \eqref{3r} reads
\begin{equation}\label{r3-1}
P_{n+1}(x)=\left(x-(2n+\lambda+1)\right)P_n(x)-n(n+\lambda)P_{n-1}(x),\qquad n=0,1,\cdots,
\end{equation}
with $P_0(x):=1$ and $P_{-1}(x):=0$. Using it, we write down the monic Laguerre polynomials $\{P_1,P_2,P_3,P_4\}$ as below:
\begin{align*}
P_1(x)=&x-(\lambda+1),\nonumber\\
				 P_2(x)=&x^2-2(\lambda+2)x+(\lambda+1)(\lambda+2),\nonumber\\
				 P_3(x)=&x^3-3(\lambda+3)x^2+3(\lambda+2)(\lambda+3)x-(\lambda+1)(\lambda+2)(\lambda +3),\nonumber\\
				 P_4(x)=&x^4-4(\lambda+4)x^3+6(\lambda+3)(\lambda+4)x^2-4(\lambda+2)(\lambda+3)(\lambda+4)x\\
&+(\lambda+1)(\lambda+2)(\lambda +3)(\lambda+4).			 
\end{align*}
Noting that $\beta_j=h_j/h_{j-1}$ for $j=1,\cdots,n$ and $h_0:=\int_{0}^{+\infty}x^\lambda \e^{-x}dx=\Gamma(\lambda+1)$, we find
\begin{equation}\label{hn-1}
h_n=h_0\beta_1\beta_2\cdots\beta_n=n! \cdot \Gamma(n+\lambda+1).
\end{equation}
Since $D_n=\prod_{j=0}^{n-1}h_j$, it follows that
\begin{align*}
D_n:=&\det \left( \int_0^{+\infty}x^{i+j}x^{\lambda}\e^{-x}dx\right) _{i,j=0}^{n-1}\\
=&\prod_{j=0}^{n-1}j!\cdot \Gamma(j+\lambda+1)=\frac{G(n+1)\cdot G(n+\lambda+1)}{G(\lambda+1)},
\end{align*}
where the Barnes $G$-function $G(z)$ is defined by \cite[p.184]{Forrester}
\begin{equation*}
G(z+1)=\Gamma(z)G(z),\qquad\qquad G(1):=1.
\end{equation*}
Since the classical Laguerre polynomials $L_n^{\lambda}(x)=(-1)^n/n!\cdot P_n(x)$ (c.f. \cite[(4.6.3)]{Ismail}), we observe that \eqref{r3-1} and \eqref{hn-1} coincide with (4.6.26) and (4.6.2) of \cite{Ismail} respectively. See also (5.1.10) and (5.1.1) of \cite{Szego}.
\end{exm}

We now examine four classes of semi-classical Laguerre-type weights and one singularly perturbed Laguerre weight, and generalize the existing results from $\lambda>0$ to $\lambda>-1$.  These results were originally derived by substituting $A_n$ and $B_n$ into $(S_1), (S_2)$ and $(S_2')$, and combining the obtained equations with the ones deduced by differentiating the orthogonality relations
 \begin{gather}	 \int_{0}^{+\infty}P_n^2(x)w_{_L}(x)dx=h_n,\label{or-Pn2-L}\\
 \int_{0}^{+\infty}P_n(x)P_{n-1}(x)w_{_L}(x)dx=0,\label{Pnn-1-L}
\end{gather}
w.r.t. the new variable introduced in $w_{_L}(x)$, not w.r.t.  $x$. When $\lambda>-1$, we make use of \eqref{AN}-\eqref{BN} to compute $A_n$ and $B_n$, and differentiate \eqref{or-Pn2-L}-\eqref{Pnn-1-L}. We verify that the resulting expressions and identities coincide with those for the $\lambda>0$ case. Consequently, the prior results for $\lambda>0$ remain valid for $\lambda>-1$.

\begin{exm}\label{ex-2-L}
The Hankel determinant generated by the weight function $w_{_L}(x)=x^{\lambda}\e^{-x}(x+t)^{\gamma}$, $\lambda,t>0,\gamma\in\mathbb{R}$ arises from the single-user multiple-input multiple-output wireless communication system and was studied in \cite{ChenMcKay12}. We assume $\lambda>-1$ and note that
\begin{equation*}
\frac{zv'_{_L}(z)-xv'_{_L}(x)}{z-x}=1-\frac{\gamma t}{(x+t)(z+t)}.
\end{equation*}
Substituting it into \eqref{AN}-\eqref{BN}, we get
\begin{align*}
A_n(z)=\frac{1-R_n(t)}{z}+\frac{R_n(t)}{z+t}, \qquad\qquad B_n(z)=-\frac{n+r_n(t)}{z}+\frac{r_n(t)}{z+t},
\end{align*}
where
\[R_n(t):=\frac{\gamma}{h_n}\int_0^{+\infty}\frac{P_n^2(x;t)w_{_L}(x)}{x+t}dx,\quad\qquad r_n(t):=\frac{\gamma}{h_{n-1}}\int_0^{+\infty}\frac{P_n(x;t)P_{n-1}(x;t)w_{_L}(x)}{x+t}dx.\]
\sloppy{The expressions of $A_n$ and $B_n$ coincide with those in \cite{ChenMcKay12} where $\lambda>0$. Moreover, we differentiate the orthogonality relations
\begin{gather*}
h_n(t)=\int_0^{+\infty}P_n^2(x;t)x^{\lambda}\e^{-x}(x+t)^{\gamma}dx,\\
0=\int_0^{+\infty}P_n(x;t)P_{n-1}(x;t)x^{\lambda}\e^{-x}(x+t)^{\gamma}dx,
\end{gather*}
w.r.t. $t$ and obtain the same identities as in \cite{ChenMcKay12} (c.f. (238) and (241)). Hence, we conclude that the Jimbo-Miwa-Okamoto $\sigma$-form of Painlev\'{e} V equation satisfied by the logarithmic derivative of the Hankel determinant with $\lambda>0$ (see \cite[(69)]{ChenMcKay12}) is also valid for $\lambda>-1$.} 

The Hankel determinant for the weight $w_{_{L}}(x)=x^{\lambda}\e^{-x}\left(\frac{x+t}{x+T}\right)^{N_s},\lambda,t,T,N_s>0$
characterizes the signal-to-noise ratio which determines the performance of the multiple-antenna wireless communication system and was investigated in \cite{ChenHaqMcKay13}. Via an argument similar to the above, we confirm that the PDE (c.f. (46) therein) for the logarithmic derivative of the Hankel determinant, which can be treated as a two-variable generalization of the $\sigma$-form of Painlev\'{e} V equation, remains valid for $\lambda>-1$.

The weight function $w_{_L}(x)=x^{\lambda}\e^{-x}\prod\limits_{k=1}^N(x+t_k)^{\lambda_k},\lambda,t_k>0, \lambda_k\in\mathbb{R}$ was considered in \cite{MuLyu2024}, and the two weights mentioned above correspond to $N=1,2$. Assuming $\lambda>-1$ and plugging
\[\frac{zv'_{_L}(z)-xv'_{_L}(x)}{z-x}=1-\sum_{k=1}^N\frac{\lambda_kt_k}{(z+t_k)(x+t_k)}\] into \eqref{AN}-\eqref{BN} yields the expressions of $A_n$ and $B_n$, which are identical to those given by (2.6)-(2.7) in \cite{MuLyu2024}. It is easy to check that the differential relations (3.1)-(3.2) in \cite{MuLyu2024} remain valid for $\lambda>-1$. Consequently, the second order PDE deduced in \cite{MuLyu2024} for the logarithmic derivative of the associated Hankel determinant with $\lambda>0$ continues to hold when $\lambda>-1$.

The recurrence coefficients of the monic orthogonal polynomials and the Hankel determinant associated with $w_{_L}(x)=x^{\lambda}\e^{-N(x+s(x^2-x))}$, $\lambda,N>0,s\in[0,1]$ were analyzed in \cite{HanChen17}. Supposing $\lambda>-1$, we can verify that the expressions of $A_n$ and $B_n$ obtained by using \eqref{AN}-\eqref{BN} and the differential identities deduced by taking the derivative of the orthogonality relations w.r.t. $s$ coincide with those in \cite{HanChen17} (c.f. (2.6)-(2.7), (2.15) and (2.18)). Therefore, the second order differential equation satisfied by the recurrence coefficient $\alpha_n(s)$ and the expression for the logarithmic derivative of the Hankel determinant in terms of $\alpha_n(s)$ and $\alpha_n'(s)$, which were established in \cite{HanChen17} for $\lambda>0$ (see (2.23), (2.27) and (4.5)), can be generalized to $\lambda>-1$.
\end{exm}

\begin{exm}
The Hankel determinant for  $w_{_{L}}(x)=x^{\lambda}\e^{-x-s/x},\lambda,s>0$ was represented by an integral involving the Painlev\'{e} III transcendent in \cite{ChenIts2010}. This conclusion remains valid for $\lambda>-1$, since the expressions of $A_n$ and $B_n$ obtained via \eqref{AN}-\eqref{BN} and the differential equalities derived by differentiating the orthogonality relations for $\lambda>-1$ are consistent with those for $\lambda>0$ (c.f. (2.7)-(2.8), (3.1) and (3.5) in \cite{ChenIts2010}).
\end{exm}

\begin{remark}
While verifying the above examples, we find that the calculations of $A_n$ and $B_n$ using \eqref{AN}-\eqref{BN} are more straightforward than those using \eqref{a-n}-\eqref{b-n}.
\end{remark}

By adapting the argument in the preceding subsection, we construct the ladder operators for the monic Laguerre-type orthogonal polynomials containing several jump discontinuities or a Fisher-Hartwig singularity with simultaneous jump and root-type behavior.

\begin{theorem}\label{L-jump}
Denote the Laguerre-type weight function with jump discontinuities by
\begin{align}\label{wh}
\widehat{w}(x;\vec{t}\,):= w_{_L}(x)\left(\omega_0 + \sum\limits_{k=1}^{m} \omega_k \theta (x-t_k)\right),
\end{align}
where $w_{_L}(x)$ is defined by \eqref{wL} and independent of $\vec{t}=(t_1,\cdots, t_m)$ with $0\leq t_1<\cdots<t_m<\infty$,  $\sum\limits_{k=0}^{\ell}\omega_k\geq0$ for $0\leq\ell\leq m$, and $\theta(x)$ is the Heaviside step function which is $1$ for $x>0$ and $0$ otherwise. The associated monic orthogonal polynomials satisfy the ladder operators \eqref{lo}-\eqref{ro} with $A_n$ and $B_n$ given by
\begin{subequations}\label{AnBn-2}
\begin{gather}\label{An-2}			 A_n(z)=\frac{1}{z}\left[\frac{1}{h_n}\int_{0}^{\infty}\frac{zv'_{_L}(z)-xv'_{_L}(x)}{z-x}P_n^{2}(x)\widehat{w}(x;\vec{t}\,)dx-\sum_{k=1}^{m}R_{n,k}\right]+\sum_{k=1}^{m}\frac{R_{n,k}}{z-t_k},	 	 \end{gather}
\begin{equation}\label{Bn-2}
\begin{aligned}
B_n(z)=&\frac{1}{z}\left[\frac{1}{h_{n-1}}\int_{0}^{\infty}\frac{zv'_{_L}(z)-xv'_{_L}(x)}{z-x}P_{n-1}(x)P_n(x)\widehat{w}(x;\vec{t}\,)dx
 -\sum_{k=1}^{m}r_{n,k}-n\right]\\
 &+\sum_{k=1}^{m}\frac{r_{n,k}}{z-t_k},
\end{aligned}
\end{equation}
\end{subequations}
where $v_{_L}(x):=-\ln w_{_L}(x)$ and $\{R_{n,k}, r_{n,k}, k=1,\cdots,m\}$ read
\begin{align*}
		R_{n,k}(\vec{t}\,):=\frac{w_k w_{_L}(t_k)P_n^2(t_k)}{h_n},\qquad\qquad r_{n,k}(\vec{t}\,):=\frac{w_kw_{_L}(t_k)P_n(t_k)P_{n-1}(t_k)}{h_{n-1}}.
\end{align*}	
\end{theorem}

\begin{exm}	
The weight function $\widehat{w}(x;\vec{t}\,)$ with $w_{_L}(x)=x^{\lambda}\e^{-x},\lambda>0$ was explored in \cite{LyuChenXu2023} and the expressions \eqref{AnBn-2} were given in Remark 10 therein. When $m=2$ with $\omega_0=0,\omega_1=1,$ $\omega_2=-1$, i.e.
\[\widehat{w}(x;t_1,t_2)= x^{\lambda}\e^{-x}\left(\theta (x-t_1)-\theta (x-t_2)\right),\]
the corresponding Hankel determinant represents the probability that all the eigenvalues of the Laguerre unitary ensemble lie in the interval $(t_1,t_2)$, up to a constant, and was investigated in \cite{BasorChenZhang12} where $\lambda>0$ and $\{a,b\}$ were used instead of $\{t_1,t_2\}$. For this case, according to \eqref{AnBn-2}, we have
\begin{align*}		 A_n(z)=&\frac{R_n}{z}+\frac{R_{n,1}}{z-t_1}+\frac{R_{n,2}}{z-t_2},\\		 	 B_n(z)=&\frac{r_n}{z}+\frac{r_{n,1}}{z-t_1}+\frac{r_{n,2}}{z-t_2},
\end{align*}
where $R_n:=1-R_{n,1}-R_{n,2}$ and $r_n:=-n-r_{n,1}-r_{n,2}$. When $\lambda>0$, via integration by parts, we have
\[R_n=\frac{\lambda}{h_n}\int_{t_1}^{t_2}P_n^2(x)x^{\lambda-1}\e^{-x}dx,\qquad\qquad r_n=\frac{\lambda}{h_{n-1}}\int_{t_1}^{t_2}P_n(x)P_{n-1}(x)x^{\lambda-1}\e^{-x}dx.\] Hence, our expressions of $\{A_n,B_n\}$ are consistent with (4.1)-(4.2) in \cite{BasorChenZhang12} where $\lambda>0$. In addition, the differential relations given by (4.27) in \cite{BasorChenZhang12} remain valid for $\lambda>-1$. Therefore, the two-variable generalization of Painlev\'{e} V system established in \cite{BasorChenZhang12} for the logarithmic derivative of the probability can be extended to $\lambda>-1$.

When $w_{_L}(x)=x^{\lambda}\e^{-x-t/x}$ and $m=1$, \eqref{wh} becomes \[\widehat{w}(x;t_1)=x^{\lambda}\e^{-x-t/x}\left(\omega_0+\omega_1\theta(x-t_1)\right),\] and
\[\frac{zv'_{_L}(z)-xv'_{_L}(x)}{z-x}=1+\frac{t}{zx}.\]
 According to \eqref{AnBn-2}, we get
\begin{align*}
A_n(z)=&\frac{tR_n}{z^2}+\frac{1-R_{n,1}}{z}+\frac{R_{n,1}}{z-t_1},\qquad \\ B_n(z)=&\frac{t r_n}{z^2}-\frac{r_{n,1}+n}{z}+\frac{r_{n,1}}{z-t_1},
\end{align*}
where
\[R_n:=\frac{1}{h_n}\int_0^{+\infty}P_n^2(x)\widehat{w}(x)\frac{dx}{x},\qquad\qquad r_n:=\frac{1}{h_{n-1}}\int_0^{+\infty}P_n(x)P_{n-1}(x)\widehat{w}(x)\frac{dx}{x}.\]
When $\omega_0=0$ and $\omega_1=1$, the above expressions match \cite[(2.1)]{LyuGriffinChen2019} where $\lambda>0$, and the differential relations (2.19) and (2.21) therein remain valid for $\lambda>-1$. Hence, the second order sixth degree PDE satisfied by the logarithmic derivative of the associated Hankel determinant (c.f. (2.40) in \cite{LyuGriffinChen2019}) still holds for $\lambda>-1$.
\end{exm}

\begin{theorem}
The monic orthogonal polynomials associated with the weight function
\begin{align*}
w(x)=w_{_{L}}(x)|x-t|^{\gamma}(A+B\theta(x-t)),\qquad x\in[0,+\infty),~t\geq0,~\gamma>0,
\end{align*}
with $w_{_L}(x)$ defined by \eqref{wL}, satisfy the ladder operators \eqref{lo}-\eqref{ro} with $A_n$ and $B_n$ reading
\begin{subequations}\label{AnBn-L5-1}
\begin{equation}
\begin{aligned}
A_n(z)=&\frac{1}{z}\left[\frac{1}{h_n}\int_0^{+\infty}\frac{zv_{_L}'(z)-xv_{_L}'(x)}{z-x}P_n^2(x)w(x)dx
-\frac{\gamma}{h_n}\int_0^{+\infty}\frac{P_n^2(x) }{x-t}w(x)dx\right]\\
&+\frac{\gamma}{ h_n}\int_0^{+\infty}\frac{P_n^2(x)w(x)}{(z-x)(x-t)}dx,
\end{aligned}
\end{equation}
\begin{equation}
\begin{aligned}
B_n(z)=&\frac{1}{z}\left[\frac{1}{h_{n-1}}\int_0^{+\infty}\frac{zv_{_L}'(z)-xv_{_L}'(x)}{z-x}P_n(x)P_{n-1}(x)w(x)dx\right.\\
&\left.\quad-\frac{\gamma}{h_{n-1}} \int_0^{+\infty}\frac{P_n(x)P_{n-1}(x)}{x-t}w(x)dx-n\right]+\frac{\gamma}{h_{n-1}}\int_0^{+\infty}\frac{P_n(x)P_{n-1}(x)}{(z-x)(x-t)}w(x)dx,
\end{aligned}
\end{equation}
\end{subequations}
where $v_{_L}(x):=-\ln w_{_L}(x)$. When $w_{_L}(x)=x^{\lambda}\e^{-x}$ with $\lambda>-1$, we have
\begin{subequations}\label{AnBn-L5-2}
\begin{align}
A_n(z)=&\frac{1-R_n(t)}{z}+\frac{\gamma}{ h_n}\int_0^{+\infty}\frac{P_n^2(x)w(x)dx}{(z-x)(x-t)},\\
B_n(z)=&-\frac{r_n(t)+n}{z}+\frac{\gamma}{h_{n-1}}\int_0^{+\infty}\frac{P_n(x)P_{n-1}(x)}{(z-x)(x-t)}w(x)dx,
\end{align}
\end{subequations}
where
\[R_n(t):=\frac{\gamma}{h_n}\int_0^{+\infty}\frac{P_n^2(x)}{x-t}w(x)dx,\qquad\qquad r_n(t):=\frac{\gamma}{h_{n-1}} \int_0^{+\infty}\frac{P_n(x)P_{n-1}(x)}{x-t}w(x)dx.\]
\end{theorem}

\begin{exm}\label{L-FH}
The ladder operators for the monic orthogonal polynomials associated with $\widetilde{w}(x)|x-t|^{\gamma}(A+B\theta(x-t)),~ x,t\in[a,b],~\gamma>0$,
with $\widetilde{w}(x)$ being a smooth function and $\widetilde{w}(a)=\widetilde{w}(b)=0$, was established in \cite{MinChenRMTA}.
In particular, the Hankel determinant generated by the moments of this weight function with $\widetilde{w}(x)=x^{\lambda}\e^{-x}, x\in[0,+\infty),\lambda>0$, was analyzed therein.

When $\lambda>0$, through integration by parts, we find that the weight function $w(x)$ defined in the above theorem satisfies the following condition
\[\int_0^{+\infty}P_n^2(x)w'(x)dx=0.\]
Inserting
\begin{align*}
w'(x)=&-v_{_L}'(x)w(x)+w_{_L}(x)|x-t|^{\gamma}B\delta(x-t)\\
&+w_{_L}(x)\left(\delta(x-t)((x-t)^{\gamma}-(t-x)^{\gamma})+\gamma\frac{|x-t|^{\gamma}}{x-t}\right)(A+B\theta(x-t))
\end{align*}
into the above integral gives us
\[\int_0^{+\infty}P_n^2(x)v_{_L}'(x)w(x)dx=\gamma\int_0^{+\infty}\frac{P_n^2(x) }{x-t}w(x)dx.\]
Using it,  we observe that $A_n$ and $B_n$ given by \eqref{AnBn-L5-1} coincide with those in Theorem 2.1 of \cite{MinChenRMTA}. In particular, through integration by parts, we note that \eqref{AnBn-L5-2} agree with (3.1)-(3.2) in their work. Moreover, the expressions of $A_n(z)$ and $B_n(z)$ for $z\rightarrow\infty$ (c.f. (3.3)-(3.4) therein) are still valid.  In addition, the differential relations (4.1) and (4.5) in \cite{MinChenRMTA} continue to hold for $\lambda>-1$. Therefore, the results established in \cite{MinChenRMTA} for the Hankel determinant $D_n$ generated by $x^{\lambda}\e^{-x}|x-t|^{\gamma}(A+B\theta(x-t))$ with $\lambda>0$ can be generalized to $\lambda>-1$, including the integral representation for $D_n$ in terms of Painlev\'{e} V transcendent, and the second order difference equation and the $\sigma$-form of Painlev\'{e} V satisfied by the logarithmic derivative of $D_n$. See Theorems 4.3 and 4.4 in \cite{MinChenRMTA}.
\end{exm}

\section{Ladder Operators for Jacobi-type Orthogonal\\ Polynomials with $\alpha,\beta>-1$}
We turn our attention to the monic Jacobi-type orthogonal polynomials associated with \eqref{wJ}, namely,
\begin{align*}
w_{_{J}}(x)=&(1-x)^{\alpha}(1+x)^{\beta}w_0(x),\qquad x\in[-1,1],\quad\alpha,\beta>-1,
\end{align*}
where, as assumed in Introduction, $w_0(x)$ is continuously differentiable and bounded at $\pm1$.
By adapting the derivation technique of the preceding section to this case, we establish the ladder operators.

\begin{theorem}\label{main-J}
The monic Jacobi-type orthogonal polynomials associated with $w_{_J}(x)$ satisfy the ladder operators:
\begin{align}
P_n'(z)=&-B_n(z)P_n(z)+\beta_nA_n(z)P_{n-1}(z),\label{loJ}\\
P_{n-1}'(z)=&\left(B_n(z)+v_{_J}'(z)\right)P_{n-1}(z)-A_{n-1}(z)P_n(z),\label{roJ}
\end{align}
for $n\geq0$, where $v_{_J}(x):=-\ln w_{_J}(x)$. The quantities $A_n(z)$ and $B_n(z)$ are given as follows:
\begin{align}\label{An-J}
A_n(z):=\frac{1}{1-z^2}\left[  \frac{1}{h_n}\int_{-1}^{1}\frac{(1-z^2)v_{_J}'(z)-(1-x^2)v_{_J}'(x)}{z-x}P_n^2(x)w_{_J}(x)dx+2n+1 \right],
\end{align}
for $n\geq0$ with $A_{-1}(z):=0$, and
\begin{align}\label{Bn-J}		 B_n(z):=&\frac{1}{1-z^2}\cdot\frac{1}{h_{n-1}}\int_{-1}^{1}\frac{(1-z^2)v_{_J}'(z)-(1-x^2)v_{_J}'(x)}{z-x}P_n(x)P_{n-1}(x)w_{_J}(x)dx\nonumber\\
&+\frac{nz-\mathbf{p}(n)}{1-z^2}, \end{align}
for $n\geq1$ with $B_0(z):=0$. Furthermore, $A_n(z)$ and $B_n(z)$ satisfy $(S_1), (S_2)$ and $(S_2')$.
\end{theorem}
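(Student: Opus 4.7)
My plan is to transcribe the Riemann-Hilbert argument of Section 3 to the Jacobi setting, with the Laguerre boundary factor $x/(z(x-z))$ replaced by the Jacobi analog $(1-x^2)/((1-z^2)(x-z))$. Starting from $R(z):=Y'(z)Y^{-1}(z)$ as in \eqref{rmatrixc}, I will compute the four entries of $R(z)$ via the property \eqref{Cpn} together with integration-by-parts formulas for $C(P_{n-1}P_nw_{_J})'$ and $C(P_n^2w_{_J})'$; identities \eqref{p'nyr}--\eqref{pn-1'y} then deliver \eqref{loJ}--\eqref{roJ} directly. The compatibility conditions $(S_1)$ and $(S_2)$ follow from these ladder operators and the three-term recurrence \eqref{3r} as in Remark \ref{lr-r}, and $(S_2')$ is a consequence of $(S_1), (S_2)$ whose derivation does not depend on the concrete form of $A_n, B_n$.

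The core computation is the Jacobi analog of Lemma \ref{lemmal}. I would use the elementary identity
\begin{equation*}
\frac{1}{(x-z)^2}=-\frac{d}{dx}\!\left(\frac{1-x^2}{(1-z^2)(x-z)}\right)-\frac{1}{1-z^2}
\end{equation*}
to recast each Cauchy-transform derivative as an integration-by-parts plus a constant correction proportional to $\int_{-1}^{1}P\cdot P\,w_{_J}\,dx$. Because $(1-x^2)w_{_J}(x)=(1-x)^{\alpha+1}(1+x)^{\beta+1}w_0(x)$ vanishes at $x=\pm1$ whenever $\alpha,\beta>-1$, the boundary contributions disappear. Expanding the integrand via $(1-x^2)/((1-z^2)(x-z))=1/(x-z)-(x+z)/(1-z^2)$, the $1/(x-z)$-piece reproduces the $C(P_{n-1}'P_nw_{_J})+C(P_{n-1}P_n'w_{_J})$ (respectively $2C(P_nP_n'w_{_J})$) terms and the $v_{_J}'$-kernel appearing in \eqref{An-J}--\eqref{Bn-J}, while the $-(x+z)/(1-z^2)$-piece produces an explicit residue that I will evaluate by the three-term recurrence.

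In the off-diagonal case, $\int_{-1}^{1}(x+z)P_{n-1}'P_nw_{_J}\,dx$ vanishes by \eqref{or-1}, and using $xP_{n-1}=P_n+\alpha_{n-1}P_{n-1}+\beta_{n-1}P_{n-2}$ together with the expansion $P_n'=nP_{n-1}+\bigl((n-1)\mathbf{p}(n)-n\mathbf{p}(n-1)\bigr)P_{n-2}+\cdots$, the relations $\alpha_{n-1}=\mathbf{p}(n-1)-\mathbf{p}(n)$, $\beta_{n-1}h_{n-2}=h_{n-1}$, and \eqref{or-2}, the remaining integral simplifies to $\int_{-1}^{1}(x+z)P_{n-1}P_n'w_{_J}\,dx=h_{n-1}(nz-\mathbf{p}(n))$. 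The diagonal analog gives $\int_{-1}^{1}(x+z)P_nP_n'w_{_J}\,dx=nh_n$, which combined with the extra $-h_n/(1-z^2)$ from the correction term produces the constant $2n+1$ in \eqref{An-J}. Feeding both Cauchy-transform identities into \eqref{rmatrixc} and simplifying via \eqref{Cpn} (applied as $P_n'C(P_{n-1}w_{_J})=C(P_{n-1}P_n'w_{_J})$ and $P_{n-1}C(P_nw_{_J})'=C(P_{n-1}P_nw_{_J})'-C(P_{n-1}'P_nw_{_J})$) yields closed expressions for $R_{1,1}, R_{1,2}, R_{2,1}$, with $R_{2,2}=-R_{1,1}$ coming from $\tr(R)=0$ as in Lemma \ref{R-L}. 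Substituting into \eqref{p'nyr}, the residual $v_{_J}'(z)$-piece proportional to $-P_n(z)C(P_{n-1}P_nw_{_J})+P_{n-1}(z)C(P_n^2w_{_J})$ collapses by another application of \eqref{Cpn}, so \eqref{loJ} emerges with $A_n, B_n$ precisely as in \eqref{An-J}--\eqref{Bn-J}; \eqref{roJ} follows along the same lines using \eqref{I1}.

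The principal technical obstacle is the bookkeeping that produces the scalars $2n+1$ in $A_n$ and $nz-\mathbf{p}(n)$ in $B_n$: unlike the Laguerre case where a single boundary factor $x/z$ suffices, the $(x+z)/(1-z^2)$ residue here interacts with $\mathbf{p}(n), \alpha_{n-1}$ and $\beta_{n-1}$ simultaneously, and matching the result to \eqref{An-J}--\eqref{Bn-J} requires carefully reorganizing the $(1-z^2)v_{_J}'(z)$ and $(1-x^2)v_{_J}'(x)$ contributions so that they combine into the kernel $\bigl[(1-z^2)v_{_J}'(z)-(1-x^2)v_{_J}'(x)\bigr]/(z-x)$.
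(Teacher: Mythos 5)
Your proposal is correct and follows essentially the same route as the paper's proof in Section 4.1: the Jacobi analogue of Lemma \ref{lemmal} via the identity $\frac{1}{(x-z)^2}+\frac{1}{1-z^2}=-\frac{d}{dx}\bigl(\frac{x^2-1}{(x-z)(z^2-1)}\bigr)$, the vanishing of the boundary term $(1-x)^{\alpha+1}(1+x)^{\beta+1}w_0(x)$ at $\pm1$ for $\alpha,\beta>-1$, the evaluations $\int_{-1}^{1}(x+z)P_{n-1}P_n'w_{_J}\,dx=(nz-\mathbf{p}(n))h_{n-1}$ and $\int_{-1}^{1}(x+z)P_nP_n'w_{_J}\,dx=nh_n$, and then the entries of $R(z)$ fed into \eqref{p'nyr}--\eqref{pn-1'y}, with $(S_1),(S_2),(S_2')$ obtained as in Remark \ref{lr-r}. (The paper also offers a second, recurrence-based derivation in Section 4.2, but your RHP argument matches its primary proof.)
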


\begin{remark}\label{re-J-1}
When $\alpha,\beta>0$, \eqref{An-J}-\eqref{Bn-J} coincide with \eqref{a-n}-\eqref{b-n}. Actually, \eqref{Bn-J} is equivalent to
\begin{align}
 B_n(z)=&\frac{1}{h_{n-1}}\int_{-1}^{1}\frac{v_{_J}'(z)-\frac{1-x^2}{1-z^2}v_{_J}'(x)}{z-x}P_n(x)P_{n-1}(x)w_{_J}(x)dx+\frac{nz-\mathbf{p}(n)}{1-z^2}\nonumber\\
 =&\frac{1}{h_{n-1}}\int_{-1}^{1}\frac{v_{_J}'(z)-v_{_J}'(x)}{z-x}P_n(x)P_{n-1}(x)w_{_J}(x)dx\nonumber\\
 &-\frac{1}{1-z^2}\cdot\frac{1}{h_{n-1}}\int_{-1}^{1}(x+z)P_n(x)P_{n-1}(x)w_{_J}(x)v_{_J}'(x)dx+\frac{nz-\mathbf{p}(n)}{1-z^2}.	 \label{Bn-J-1}	
 \end{align}
When $\alpha,\beta>0$, we have
\begin{align}
&-\int_{-1}^{1}(x+z)P_n(x)P_{n-1}(x)w_{_J}(x)v_{_J}'(x)dx=\int_{-1}^{1}(x+z)P_n(x)P_{n-1}(x)dw_{_J}(x)\nonumber\\		 &=\left.(x+z)P_n(x)P_{n-1}(x)(1-x)^{\alpha}(1+x)^{\beta}w_0(x)\right|_{x=-1}^{x=1}-\int_{-1}^{1}P_n(x)P_{n-1}(x)w_{_J}(x)dx\nonumber\\
&\quad-\int_{-1}^{1}(x+z)P_n'(x)P_{n-1}(x)w_{_J}(x)dx
-\int_{-1}^{1}P_n(x)\left((x+z) P_{n-1}'(x)\right)w_{_J}(x)dx\nonumber\\
&=-\int_{-1}^{1}\left((x+z)P_n'(x)\right)P_{n-1}(x)w_{_J}(x)dx,\label{R5-1}
\end{align}
where the last identity is owing to \eqref{dp}-\eqref{or-1}.
Since $P_n(x)=x^n+\mathbf{p}(n)x^{n-1}+\cdots$ for $n\geq2$, we have
\vspace{-2mm}
\begin{align*}
P_n'(x)=&nx^{n-1}+(n-1)\mathbf{p}(n)x^{n-2}+\cdots\\
=&nP_{n-1}(x)+\{\text{linear combinations of } P_k(x),~0\leq k\leq n-2\},\\
xP_n'(x)=&nx^{n}+(n-1)\mathbf{p}(n)x^{n-1}+\cdots\\
=&nP_{n}(x)-\mathbf{p}(n)P_{n-1}(x)+\{\text{linear combinations of } P_k(x),~0\leq k\leq n-2\}.
\end{align*}
Hence, according to the orthogonality relation \eqref{dp}, we find
\begin{align}\label{R5-2}
\int_{-1}^{1}\left((x+z)P_n'(x)\right)P_{n-1}(x)w_{_J}(x)dx=&\left(nz-\mathbf{p}(n)\right)h_{n-1},
\end{align}
for $n\geq2$. When $n=1$, noting that $P_1'(x)=1$ and
\[-\mathbf{p}(1)=\alpha_0=\frac{1}{h_0}\int_{-1}^1xw_{_J}(x)dx,\]
where the first identity is due to \eqref{alp} and the second one results from \eqref{3r} with $j=0$. Hence, \eqref{R5-2} also holds for $n=1$.
Combining \eqref{R5-2} with \eqref{R5-1} and substituting the resulting equation into
\eqref{Bn-J-1}, we are led to \eqref{b-n}. By an analogous argument, we can show that, when $\alpha,\beta>0$, \eqref{An-J} is consistent with \eqref{a-n}.	
\end{remark}

\subsection{Proof of the ladder operators via RHP}\label{proof-J-RHP}
The following lemma is essential for the derivation.
\begin{lemma}\label{lemma-J}
$C(P_{n-1}P_nw_{_J})'$ and $C(P_n^2w_{_J})'$ can be expressed as
 \begin{align}
 &C(P_{n-1}P_nw_{_J})'=C(P_{n-1}'P_nw_{_J})+C(P_{n-1}P_n'w_{_J})\nonumber\\
 &\quad\qquad+\frac{1}{z^2-1}\left[\int_{-1}^{1}\frac{(1-x^2)v_{_J}'(x)}{x-z}P_{n-1}(x)P_n(x)w_{_J}(x)dx+\left(nz-\mathbf{p}(n)\right)h_{n-1}\right],\label{C(P_{n-1}P_nw)'}\\	 &C(P_n^2w_{_J})'= 2C(P_n'P_nw_{_J})+\frac{1}{z^2-1}\left[\int_{-1}^{1}\frac{(1-x^2)v_{_J}'(x)}{x-z}P_n^2(x)w_{_J}(x)dx+(2n+1) h_n\right].\label{lemma2-1}
\end{align}
\end{lemma}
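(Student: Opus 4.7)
The plan is to mirror the derivation of Lemma for the Laguerre case (c.f.\ equations \eqref{lemma21}--\eqref{CPn2'}), with one essential modification: since the Jacobi weight $w_{_J}(x)$ is supported on a finite interval and vanishes only at the endpoints $\pm 1$, I must choose a different ``antiderivative'' of $(x-z)^{-2}$ so that boundary terms vanish for $\alpha,\beta>-1$. The key algebraic identity I will exploit is
\begin{equation*}
\frac{1}{(x-z)^2}=\frac{d}{dx}\left(\frac{1-x^2}{(z^2-1)(x-z)}\right)+\frac{1}{z^2-1},
\end{equation*}
which can be verified by a direct calculation writing $1-x^2=-(x-z)(x+z)+(1-z^2)$.

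First I will insert this identity into $C(P_{n-1}P_nw_{_J})'=\int_{-1}^{1}P_{n-1}(x)P_n(x)w_{_J}(x)/(x-z)^2\,dx$. The constant term contributes $\tfrac{1}{z^2-1}\int_{-1}^{1}P_{n-1}P_n w_{_J}\,dx$, which vanishes by orthogonality \eqref{dp}. For the derivative term, I will integrate by parts. The boundary piece evaluates to $[(1-x^2)w_{_J}(x)\cdot P_{n-1}(x)P_n(x)/((z^2-1)(x-z))]_{-1}^{1}$; since $(1-x^2)w_{_J}(x)=(1-x)^{\alpha+1}(1+x)^{\beta+1}w_0(x)$ vanishes at $\pm 1$ precisely when $\alpha,\beta>-1$ (using that $w_0$ is bounded at $\pm 1$), this boundary contribution is zero. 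The surviving integral reads $-\int_{-1}^{1}(P_{n-1}P_nw_{_J})'\cdot\frac{1-x^2}{(z^2-1)(x-z)}\,dx$, and $(P_{n-1}P_nw_{_J})'=(P_{n-1}'P_n+P_{n-1}P_n')w_{_J}-v_{_J}'P_{n-1}P_nw_{_J}$.

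Next I will use the partial-fraction decomposition
\begin{equation*}
\frac{1-x^2}{(z^2-1)(x-z)}=-\frac{x+z}{z^2-1}-\frac{1}{x-z}
\end{equation*}
to split each resulting integral into a polynomial-weighted part and a Cauchy transform. The $\frac{1}{x-z}$ pieces immediately reassemble into $C(P_{n-1}'P_nw_{_J})$, $C(P_{n-1}P_n'w_{_J})$, and $\frac{1}{z^2-1}\int_{-1}^{1}\frac{(1-x^2)v_{_J}'(x)}{x-z}P_{n-1}P_n w_{_J}\,dx$. The polynomial pieces are evaluated by \eqref{or-1}--\eqref{or-2}: $\int(x+z)P_{n-1}'P_n w_{_J}\,dx=0$ because $(x+z)P_{n-1}'$ has degree at most $n-1$, while $\int(x+z)P_{n-1}P_n'w_{_J}\,dx=(nz-\mathbf{p}(n))h_{n-1}$ by the very computation carried out in Remark~\ref{re-J-1} (via $xP_n'(x)=nP_n(x)-\mathbf{p}(n)P_{n-1}(x)+\text{lower}$). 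Collecting these terms produces \eqref{C(P_{n-1}P_nw)'}. The proof of \eqref{lemma2-1} proceeds identically, except that the constant contribution $\frac{1}{z^2-1}\int P_n^2 w_{_J}\,dx$ now equals $\frac{h_n}{z^2-1}$ and the polynomial integral $\int(x+z)P_n'P_n w_{_J}\,dx$ equals $nh_n$ (via $\int xP_n'P_n w_{_J}\,dx=nh_n$ from \eqref{or-2}), together giving the factor $(2n+1)h_n$.

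The main obstacle is simply identifying the correct ``antiderivative'' $\frac{1-x^2}{(z^2-1)(x-z)}$: it must be chosen so that (i) its derivative recovers $(x-z)^{-2}$ modulo a constant-in-$x$ residue that cancels by orthogonality, and (ii) it carries the factor $1-x^2$ necessary to absorb the singularities of $w_{_J}$ at $\pm 1$ down to $\alpha+1,\beta+1>0$. Once this identity is in hand, the rest is a careful but routine bookkeeping of polynomial-degree arguments, exactly paralleling the Laguerre derivation in Lemma~\ref{lemmal}.
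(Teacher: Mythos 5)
Your proposal is correct and follows essentially the same route as the paper: the paper likewise adds $\frac{1}{1-z^2}$ to $\frac{1}{(x-z)^2}$ (harmless by orthogonality), recognizes the sum as $d\bigl(\tfrac{1-x^2}{(x-z)(z^2-1)}\bigr)$, integrates by parts with the boundary term killed by the factor $(1-x)^{\alpha+1}(1+x)^{\beta+1}$, and evaluates the polynomial pieces via $\int_{-1}^{1}(x+z)P_n'(x)P_{n-1}(x)w_{_J}(x)\,dx=(nz-\mathbf{p}(n))h_{n-1}$. The only bookkeeping point worth making explicit in \eqref{lemma2-1} is that the polynomial contribution $nh_n$ enters with the factor $2$ coming from $d(P_n^2w_{_J})=2P_nP_n'w_{_J}\,dx-\cdots$, so that the bracket collects $2nh_n+h_n=(2n+1)h_n$.
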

\begin{proof}
From the definition of $C(P_{n-1}P_nw_{_J})$, it follows that     	 
\begin{align}		 C(P_{n-1}P_nw_{_J})'=&\dfrac{d}{dz}\int_{-1}^{1}\frac{P_{n-1}(x)P_n(x)w_{_J}(x)}{x-z}dx=	 \int_{-1}^{1}\frac{P_{n-1}(x)P_n(x)w_{_J}(x)}{(x-z)^2}dx\nonumber\\		 =&\int_{-1}^{1}{P_{n-1}(x)P_n(x)w_{_J}(x)}\left(\frac{1}{(x-z)^2}+\frac{1}{1-z^2}\right)dx\nonumber\\		 =&-\int_{-1}^{1}{P_{n-1}(x)P_n(x)w_{_J}(x)}\,d\left(\frac{1}{x-z}+\frac{x+z}{z^2-1}\right)\nonumber\\		 =&-\int_{-1}^{1}{P_{n-1}(x)P_n(x)w_{_J}(x)}\,d\left(\frac{x^2-1}{(x-z)(z^2-1)}\right),\label{J-C-d}
\end{align}
where the equality on the second line of the above equation is due to \eqref{dp}. To continue, we integrate by parts and get
    \begin{align}		 C(P_{n-1}P_nw_{_J})'=&\left.P_{n-1}(x)P_n(x)\frac{(1-x^2)w_{_J}(x)}{(x-z)(z^2-1)}\right|_{x=-1}^{x=1}\nonumber\\&+\int_{-1}^{1}P_{n-1}'(x)P_n(x)w_{_J}(x)\left(\frac{1}{x-z}+\frac{x+z}{z^2-1}\right)	 dx\nonumber\\
    &+\int_{-1}^{1}P_{n-1}(x)P_n'(x)w_{_J}(x)\left(\frac{1}{x-z}+\frac{x+z}{z^2-1}\right)	 dx\nonumber\\
    &+\frac{1}{z^2-1}\int_{-1}^{1}\frac{(1-x^2)v_{_J}'(x)}{x-z}P_{n-1}(x)P_n(x)w_{_J}(x)dx.\label{3.6}
\end{align}
For $\alpha,\beta>-1$, we have $(1-x^2)w_{_J}(x)|_{x=\pm1}=(1-x)^{\alpha+1}(1+x)^{\beta+1}w_0(x)|_{x=\pm1}=0$, since $w_0(x)$ is bounded at $\pm1$, as per Assumption \ref{ass:w0} from the Introduction. Hence, the first term on the r.h.s. of \eqref{3.6} is zero. In view of \eqref{or-1}, the second term is simplified to $C(P_{n-1}'P_nw_{_J})$.
On account of \eqref{R5-2}, the third term becomes $C(P_{n-1}P_n'w_{_J})+\frac{h_{n-1 }}{z^2-1}\left(nz-\textbf{p}(n)\right)$.
Combining the above analysis with \eqref{3.6}, we come to \eqref{C(P_{n-1}P_nw)'}.
The proof of \eqref{lemma2-1} is quite similar and so is left to the reader.
\end{proof}	

{\sloppy \begin{remark}
From the above examination, we note that the construction of  the differential $d\big(\frac{x^2-1}{(x-z)(z^2-1)}\big)$  in  \eqref{J-C-d} is critical for our derivation. It ensures that $w_{_{J}}(x)$ is multiplied by $(1-x^2)$ in the first term on the r.h.s. of \eqref{3.6}, which guarantees that this term is zero at $x=\pm1$ for $\alpha,\beta>-1$.
When $\alpha,\beta>0$, the differential $d\left(\frac{1}{x-z}\right)$ was instead introduced in \cite{WvA18}. See Remark \ref{L-R2} for the reasoning.
\end{remark}}

An argument analogous to the one used in Lemma \ref{R-L} leads us to the following expressions.

\begin{lemma}
		The quantities $R_{1,1}, R_{1,2}, R_{2,1}$ and  $R_{2,2}$ are given by
\begin{align*}			 R_{1,1}(z)=&-R_{2,2}(z)\\
=&\frac{1}{z^2-1}\left[ \frac{1}{h_{n-1}}\int_{-1}^{1}\frac{(1-x^2)v_{_J}'(x)}{x-z}P_n(x)P_{n-1}(x)w_{_J}(x)dx+nz-\mathbf{p}(n)\right],\\
R_{1,2}(z)=&\frac{1}{2\pi i}\cdot \frac{1}{z^2-1}\left[\int_{-1}^{1}\frac{(1-x^2)v_{_J}'(x)}{x-z}P_n^2(x)w_{_J}(x)dx+(2n+1)h_n\right],\\
R_{2,1}(z)=&-\frac{2 \pi i}{h_{n-1}^2}\cdot \frac{1}{z^2-1}\left[\int_{-1}^{1}\frac{(1-x^2)v_{_J}'(x)}{x-z}P_{n-1}^2(x)w_{_J}dx+(2n-1)h_{n-1}\right].
\end{align*}
\end{lemma}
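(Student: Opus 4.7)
The plan is to mirror the derivation of Lemma \ref{R-L} from the Laguerre setting, with Lemma \ref{lemma-J} replacing Lemma \ref{lemmal} at the crucial step. Since $\det Y(z) \equiv 1$, the identity $\frac{d}{dz}\det Y(z) = \det Y(z)\cdot\mathrm{tr}(Y'(z)Y^{-1}(z)) = \mathrm{tr}(R(z))$ immediately forces $\mathrm{tr}(R(z)) = 0$, i.e. $R_{1,1}(z) = -R_{2,2}(z)$. The remaining task is to extract each entry of $R(z)$ from \eqref{rmatrixc} and use \eqref{Cpn} together with Lemma \ref{lemma-J} to convert the occurrences of $C(P_{n-1}P_n w_{_J})'$ and $C(P_n^2 w_{_J})'$ into the explicit real integrals stated in the lemma.

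For $R_{1,1}$, I would start from
$$R_{1,1}(z) = \frac{1}{h_{n-1}}\bigl(-P_n'(z)\,C(P_{n-1}w_{_J}) + P_{n-1}(z)\,C(P_n w_{_J})'\bigr).$$
Property \eqref{Cpn} gives $P_n'\,C(P_{n-1}w_{_J}) = C(P_{n-1}P_n' w_{_J})$ and $P_{n-1}'\,C(P_n w_{_J}) = C(P_{n-1}'P_n w_{_J})$; differentiating the identity $P_{n-1}\,C(P_n w_{_J}) = C(P_{n-1}P_n w_{_J})$ in $z$ and substituting yields $P_{n-1}\,C(P_n w_{_J})' = C(P_{n-1}P_n w_{_J})' - C(P_{n-1}'P_n w_{_J})$. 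Inserting everything collapses the expression to
$$R_{1,1} = \frac{1}{h_{n-1}}\bigl[C(P_{n-1}P_n w_{_J})' - C(P_{n-1}'P_n w_{_J}) - C(P_{n-1}P_n' w_{_J})\bigr],$$
which, by \eqref{C(P_{n-1}P_nw)'}, produces the claimed formula. Note that the Jacobi-specific term $nz - \mathbf{p}(n)$ appearing in the final answer is inherited directly from the corresponding contribution in \eqref{C(P_{n-1}P_nw)'}, which itself traces back to the integration by parts identity \eqref{R5-2} established in Remark \ref{re-J-1}.

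For $R_{1,2}$, an analogous chain starting from
$$R_{1,2}(z) = \frac{1}{2\pi i}\bigl(-P_n'(z)\,C(P_n w_{_J}) + P_n(z)\,C(P_n w_{_J})'\bigr),$$
together with the relations $P_n'\,C(P_n w_{_J}) = C(P_n'P_n w_{_J})$ and $P_n\,C(P_n w_{_J}) = C(P_n^2 w_{_J})$ (and the derivative of the latter), reduces the right-hand side to $(2\pi i)^{-1}\bigl[C(P_n^2 w_{_J})' - 2\,C(P_n'P_n w_{_J})\bigr]$; invoking \eqref{lemma2-1} then yields the stated expression, with the boundary term $(2n+1)h_n$ emerging from the corresponding piece of \eqref{lemma2-1}. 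The formula for $R_{2,1}$ follows from the same symmetry already exploited in the Laguerre case: inspection of \eqref{rmatrixc} shows that the $(2,1)$-entry is obtained from the $(1,2)$-entry by the substitution $n \mapsto n-1$ together with the prefactor $-(2\pi i)^2/h_{n-1}^2$, so the expression for $R_{2,1}$ is forced by that for $R_{1,2}$.

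The only genuine obstacle is analytic rather than algebraic: for $-1 < \alpha, \beta \le 0$ the weight $w_{_J}$ need not vanish at $\pm 1$, so the naive integration by parts used in previous Jacobi-type derivations is no longer legitimate. That obstacle has already been disposed of inside Lemma \ref{lemma-J}, where the shifted exponents $\alpha+1, \beta+1$ are strictly positive and the boundary contribution accordingly vanishes. Once Lemma \ref{lemma-J} is in hand, the present lemma becomes a bookkeeping exercise entirely parallel to the proof of Lemma \ref{R-L}.
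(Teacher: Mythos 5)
Your proposal is correct and follows exactly the route the paper intends: the paper itself only states that this lemma follows by ``an argument analogous to the one used in Lemma \ref{R-L}'', and your write-up is precisely that analogy spelled out — trace-zero from $\det Y\equiv 1$, the entrywise reduction via \eqref{Cpn}, substitution of Lemma \ref{lemma-J} in place of Lemma \ref{lemmal}, and the $n\mapsto n-1$ symmetry for $R_{2,1}$. No gaps.
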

Finally, by substituting the above expressions into \eqref{p'nyr}-\eqref{pn-1'y},
we arrive at the ladder operators \eqref{loJ}-\eqref{Bn-J}, from which, as noted in Remark \ref{lr-r}, the compatibility conditions $(S_1), (S_2)$ and $(S_2')$ are derived.

\subsection{An alternative derivation of the ladder operators}\label{proof-J-1}
We first derive the lowering operator \eqref{lo}.	 Noting that $P_n(z)=z^n+\textbf{p}(n)z^{n-1}+\cdots$ for $n\geq1$, we get
\begin{align}
(1-z^2)P_n'(z)=&-nz^{n+1}-(n-1)\textbf p(n)z^n+\cdots\nonumber\\
=&-nP_{n+1}(z)+\left(n \textbf p(n+1)-(n-1)\textbf p(n)\right)P_{n}(z)+\sum_{j=0}^{n-1}c_{n,j}P_j(z),\label{y2Pn'}
\end{align}
for $n\geq1$, where
	\begin{equation*}		 c_{n,j}=\frac{1}{h_j}\int_{-1}^{1}(1-x^2)P_n'(x)P_j(x)w_{_J}(x)dx,\quad\qquad 0\leq j\leq n-1.
	\end{equation*}
Integrating by parts yields
\begin{align}		 c_{n,j}=&\frac{1}{h_j}\int_{-1}^{1}(1-x^2)P_j(x)w_{_J}(x)\,dP_n(x)\nonumber\\		 =&\frac{1}{h_j}\int_{-1}^{1}\left(2xP_j(x)\right)P_n(x)w_{_J}(x)dx-\frac{1}{h_j}\int_{-1}^{1}\left((1-x^2)P_j'(x)\right)P_n(x)w_{_J}(x)dx\nonumber\\		 &+\frac{1}{h_j}\int_{-1}^{1}(1-x^2)v_{_J}'(x)P_j(x)P_n(x)w_{_J}(x)dx,\quad\qquad 0\leq j\leq n-1,\label{cnk-2}
\end{align}
where we use the fact that $\left.(1-x^2)w_{_J}(x)\right|_{x=\pm 1}=\left.(1-x)^{\alpha+1}(1+x)^{\beta+1}w_0(x)\right|_{x=\pm 1}=0$, since $\alpha,\beta>-1$ and $w_0(\pm1)$ is bounded (see Assumption \ref{ass:w0} in the Introduction). Applying the recurrence relation \eqref{3r} and equality \eqref{y2Pn'} to replace $xP_j(x)$ and $(1-x^2)P_j'(x)$ respectively in \eqref{cnk-2}, in view of the orthogonality relation \eqref{dp} and \eqref{bth}, we obtain
\begin{align*}		 c_{n,n-1}=&(n+1)\beta_n+\frac{1}{h_{n-1}}\int_{-1}^{1}(1-x^2)v_{_J}'(x)P_{n-1}(x)P_n(x)w_{_J}(x)dx,
\end{align*}
for $n\geq1$, and
\begin{align*} c_{n,j}=\frac{1}{h_j}\int_{-1}^{1}(1-x^2)v_{_J}'(x)P_j(x)P_n(x)w_{_J}(x)dx, \quad\qquad 0\leq j\leq n-2,
\end{align*}
for $n\geq2$.
Substituting them into \eqref{y2Pn'}, in light of \eqref{or-1}, we are led to
\begin{align*}
(1-z^2)P_n'(z)=&-nP_{n+1}(z)+\left(n\textbf{p}(n+1) - (n-1)\textbf{p}(n)\right)P_n(z)+(n+1)\beta_nP_{n-1}(z)\nonumber\\		 &-\int_{-1}^{1}\left((1-z^2)v_{_J}'(z)-(1-x^2)v_{_J}'(x)\right)P_n(x)w_{_J}(x)\sum_{j=0}^{n-1}\frac{P_j(z)P_j(x)}{h_j}dx.
\end{align*}
Using \eqref{3r}, \eqref{alp} and \eqref{cd} to get rid of $P_{n+1}(z)$, $\textbf p(n+1)$ and the summation term respectively, we arrive at the lowering operator \eqref{lo}.

Now we make use of the definitions of $A_n$ and $B_n$ given by \eqref{An-J}-\eqref{Bn-J} to deduce $(S_1)$, i.e.	 \[B_{n+1}(z)+B_n(z)=(z-\alpha_n)A_n(z)-v_{_J}'(z).\]
We get
\begin{align*}		 B_{n+1}(z)+B_n(z)=&\frac{1}{1-z^2}\left[\frac{1}{h_n}\int_{-1}^{1}\frac{(1-z^2)v_{_J}'(z)-(1-x^2)v_{_J}'(x)}{z-x}(x-\alpha_n)P_n^2(x)w_{_J}(x)dx\right.\nonumber\\
&\left.\qquad\qquad+(2n+1)z-\mathbf{p}(n+1)-\mathbf{p}(n)\right],		 \end{align*}	
where the identity $\frac{P_{n+1}(x)}{h_n}+\frac{P_{n-1}(x)}{h_{n-1}}=\frac{(x-\alpha_n)P_n(x)}{h_n}$, due to the recurrence relation \eqref{3r}, is used. And
\begin{align*}
(z-\alpha_n)A_n(z)=&\frac{1}{1-z^2}\left[\frac{1}{h_n}\int_{-1}^{1}\frac{(1-z^2)v_{_J}'(z)-(1-x^2)v_{_J}'(x)}{z-x}(z-\alpha_n)P_n^2(x)w_{_J}(x)dx\right.\nonumber\\
 &\left.\qquad\qquad+(2n+1)z-(2n+1)\alpha_n\right].
\end{align*}
It follows that
\begin{align}	
 &B_{n+1}(z)+B_n(z)-(z-\alpha_n)A_n(z)\nonumber\\
&=\frac{1}{1-z^2}\left[\frac{1}{h_n}\int_{-1}^{1}(1-x^2)v_{_J}'(x)P_n^2(x)w_{_J}(x)dx	 +(2n+1)\alpha_n-\mathbf{p}(n+1)-\mathbf{p}(n)\right]-v_{_J}'(z).\label{S1-1}
\end{align}
Through integration by parts, the integral term in the above square bracket is given by
\begin{align}		 &\frac{1}{h_n}\int_{-1}^{1}(1-x^2)v_{_J}'(x)P_n^2(x)w_{_J}(x)dx=-\frac{1}{h_n}\int_{-1}^{1}(1-x^2)P_n^2(x)\,dw_{_J}(x)\nonumber\\ &\qquad=-\frac{P_n^2(x)}{h_n}\left.(1-x)^{\alpha+1}(1+x)^{\beta+1}w_0(x)\right|_{x=-1}^{x=1}-\frac{2 }{h_n}\int_{-1}^{1}\left(xP_n(x)\right)P_n(x)w_{_J}(x)dx\nonumber\\
&\qquad\qquad+\frac{2}{h_n}\int_{-1}^{1}P_n(x)\left((1-x^2)P_n'(x)\right)w_{_J}(x)dx\nonumber\\
&\qquad=2\left(-\alpha_n+n\mathbf{p}(n+1)-(n-1)\mathbf{p}(n)\right),\label{intvJ'}
\end{align}
where, to obtain the last equality, we make use of the recurrence relation \eqref{3r} and  \eqref{y2Pn'}, combined with \eqref{dp}. Substituting \eqref{intvJ'} back into \eqref{S1-1}, and noting that $\textbf{p}(n+1)=\textbf{p}(n)-\alpha_n$, we come to $(S_1)$.

Combining $(S_1)$ with the lowering operator \eqref{loJ}, on account of the recurrence relation \eqref{3r}, one produces the raising operator \eqref{roJ}. See, for instance, \cite[Theorem 2.3]{MinFang25}. Using the ladder operators and $(S_1)$, with the aid of \eqref{3r} again, one obtains $(S_2)$ which combined with $(S_1)$ leads to $(S_2')$. See, e.g. \cite[Theorem 4.2]{WvA18}.

\subsection{Examples and extensions}
We provide examples to illustrate the validity of Theorem \ref{main-J} and generalize the existing results established by using the ladder operator approach from $\alpha,\beta>0$ to $\alpha,\beta>-1$. In this subsection, we assume $x\in[-1,1]$, unless stated otherwise.

\begin{exm}\label{classicJ}
 Consider the monic Jacobi polynomials orthogonal w.r.t. $w_{_J}(x)=(1-x)^{\alpha}(1+x)^{\beta},\alpha,\beta>-1$. We have
\begin{align*}
\frac{(1-z^2)v_{_J}'(z)-(1-x^2)v_{_J}'(x)}{z-x}=\alpha+\beta.
    \end{align*}
Substituting it into \eqref{An-J}-\eqref{Bn-J} gives us
\begin{align*}
A_n(z)=\frac{2n+1+\alpha+\beta}{1-z^2},\qquad\qquad
	    B_n(z)=\frac{nz-\mathbf{p}(n)}{1-z^2}.
\end{align*}
Plugging them into $(S_2')$ and multiplying both sides of the resulting equation by $(1-z^2)^2$ yields
\begin{equation*}
\begin{aligned} \left((n+\alpha+\beta)z-\mathbf{p}(n)+\alpha-\beta\right)\cdot
\left(nz-\mathbf{p}(n)\right)&+n\left(n+\alpha+\beta\right)\left(1-z^2\right)\\
&=\left(2n-1+\alpha+\beta\right)\left(2n+1+\alpha+\beta\right)\beta_n.
\end{aligned}
\end{equation*}
Setting $z=\pm1$ in the above equality results in
	\begin{align}		 \left(\mathbf{p}(n)-n-2\alpha\right)\left(\mathbf{p}(n)-n\right)
=&\left(2n-1+\alpha+\beta\right)\left(2n+1+\alpha+\beta\right)\beta_n\label{pbt}\\
=&\left(\mathbf{p}(n)+n+2\beta\right)\left(\mathbf{p}(n)+n\right).\label{p}
\end{align}
It follows from \eqref{p} that
	\begin{equation*}	 \mathbf{p}(n)=\frac{n(\alpha-\beta)}{2n+\alpha+\beta}.
	\end{equation*}	
Inserting it into \eqref{alp}, i.e. $\alpha_n=\mathbf{p}(n)-\mathbf{p}(n+1)$, and \eqref{pbt} leads us to
	\begin{align}		 \alpha_n=&\frac{\beta^2-\alpha^2}{(2n+\alpha+\beta)(2n+2+\alpha+\beta)},\label{al-J}\\
\beta_n=&\frac{4n(n+\alpha)(n+\beta)(n+\alpha+\beta)}{(2n+\alpha+\beta)^2(2n+1+\alpha+\beta)(2n-1+\alpha+\beta)}.\label{bt-J}
\end{align}
\end{exm}

\begin{remark}
\eqref{al-J}-\eqref{bt-J} were deduced in \cite{ChenIsmail05} for the monic Jacobi polynomials with $\alpha,\beta>0$ by using $(S_1)$ and $(S_2)$. The computations therein involve additional technical steps, compared with our direct formulation presented above.

In addition, by a change of variable, we can obtain from \eqref{al-J}-\eqref{bt-J} the recurrence coefficients for the shifted monic Jacobi polynomials $\widehat{P}_n^{(\alpha,\beta)}(x)$ orthogonal w.r.t. $x^{\alpha}(1-x)^{\beta},x\in[0,1]$, namely
\vspace{-2mm}
\[\int_0^1\widehat{P}_n^{(\alpha,\beta)}(x)\widehat{P}_m^{(\alpha,\beta)}(x)x^{\alpha}(1-x)^{\beta}dx= h_n\delta_{mn},\]
where $\delta_{mn}$ is the Kronecker delta function. Setting $x=(y+1)/2$ in the above integral and writing $\widehat{P}_n^{(\alpha,\beta)}((y+1)/2)=:P_n^{(\beta,\alpha)}(y)/2^n$, we get
\[\int_{-1}^1P_n^{(\beta,\alpha)}(y)P_m^{(\beta,\alpha)}(y)(1-y)^{\beta}(1+y)^{\alpha}dy=2^{m+n+\alpha+\beta+1}h_n\delta_{mn},\]
which indicates that $\{P_n^{(\beta,\alpha)}(y)\}$ are essentially the monic Jacobi polynomials orthogonal w.r.t. $(1-y)^{\beta}(1+y)^{\alpha}, y\in[-1,1]$. According to Example \ref{classicJ}, we have
\[yP_n^{(\beta,\alpha)}(y)=P_{n+1}^{(\beta,\alpha)}(y)-\alpha_nP_n^{(\beta,\alpha)}(y)+\beta_nP_{n-1}^{(\beta,\alpha)}(y)\]
where $\alpha_n$ and $\beta_n$ are given by \eqref{al-J}-\eqref{bt-J}. Here the negative sign before $\alpha_n$ is due to the exchange of $\alpha$ and $\beta$ in \eqref{al-J}. Replacing $P_j^{(\beta,\alpha)}(y)$ by $2^j\widehat{P}_j^{(\alpha,\beta)}(x)$ for $j=n-1,n,n+1$ and $y$ by $2x-1$ in the above identity, dividing both sides of the resulting equation by $2^{n+1}$, we come to the recurrence relation satisfied by $\widehat{P}_n^{(\alpha,\beta)}(x):$
\begin{align}\label{cJ-sJ-3r}
x\widehat{P}_n^{(\alpha,\beta)}(x)=\widehat{P}_{n+1}^{(\alpha,\beta)}(x)+\frac{1-\alpha_n}{2}\widehat{P}_n^{(\alpha,\beta)}(x)+\frac{\beta_n}{4}\widehat{P}_{n-1}^{(\alpha,\beta)}(x).
\end{align}
\end{remark}

We now first make use of \eqref{An-J}-\eqref{Bn-J} to compute $A_n$ and $B_n$ for several Jacobi-type weight functions which were investigated in earlier research under the assumption that $\alpha,\beta>0$. Then, for $w_{_J}(x)$ that is not even, we differentiate the orthogonality relations satisfied by the associated monic orthogonal polynomials, namely
 \begin{gather*}	
 \int_{-1}^{1}P_n^2(x)w_{_J}(x)dx=h_n,\\
 \int_{-1}^{1}P_n(x)P_{n-1}(x)w_{_J}(x)dx=0.
\end{gather*}
When $w_{_J}(x)$ is even, the second identity should be replaced by
\[\int_{-1}^{1}P_n(x)P_{n-2}(x)w_{_J}(x)dx=0.\label{Pnn-2-J}\]
 We compare the resulting expressions of $\{A_n,B_n\}$ and the differential equalities for $\alpha,\beta>-1$ with previous work for $\alpha,\beta>0$. If they are consistent, following the argument presented just before Example \ref{ex-2-L}, then we can generalize the existing findings for the associated Hankel determinants to the range $\alpha,\beta>-1$. We first analyze three families of semi-classical weights.
 \\
\begin{exm}\label{ex-J2}
The Hankel determinant for the weight function $w_{_J}(x)=(1-x)^{\alpha}(1+x)^{\beta}\e^{-tx},\alpha,\beta>0, t\in\mathbb{R}$ was studied in \cite{BCE10}. Supposing $\alpha,\beta>-1$, by inserting
\[\frac{(1-z^2)v_{_J}'(z)-(1-x^2)v_{_J}'(x)}{z-x}=-t(x+z)+\alpha+\beta
\]
into \eqref{An-J}-\eqref{Bn-J}, we get
\begin{align*}
A_n(z)=&\frac{-t(\alpha_n+z)+2n+1+\alpha+\beta}{1-z^2}=:\frac{R_n(t)}{1-z}+\frac{R_n(t)+t}{1+z},\\ B_n(z)=&\frac{-t\beta_n+nz-\mathbf{p}(n)}{1-z^2}=:\frac{r_n(t)}{1-z}+\frac{r_n(t)-n}{1+z},
\end{align*}
where
\[
R_n(t):=\frac{2n+1+\alpha+\beta-t\alpha_n-t}{2},\qquad\qquad r_n(t):=\frac{n-t\beta_n-\mathbf{p}(n)}{2}.\]
We observe that our expressions of $\{A_n, B_n\}$ are formally identical to those in \cite{BCE10} where $\alpha,\beta>0$. Moreover, when $\alpha,\beta>0$, according to (3.13) and the identity between (3.20)-(3.21) in \cite{BCE10}, which were deduced by substituting their expressions of $A_n$ and $B_n$ into $(S_1)$ and $(S_2')$, we find that
\begin{align*}
R_n(t)=\frac{\alpha}{h_n}\int_{-1}^1\frac{P_n^2(y)}{1-y}w_{_J}(y)dy,\qquad\qquad r_n(t)=\frac{\alpha}{h_n}\int_{-1}^1\frac{P_n(y)P_{n-1}(y)}{1-y}w_{_J}(y)dy.
\end{align*}
In addition, we readily see that the differential identities (2.9) and (2.12) in \cite{BCE10} remain valid for $\alpha,\beta>-1$. Therefore, the conclusion that the Hankel determinant was closely connected to a particular Painlev\'{e} V equation, deduced in \cite{BCE10} for $\alpha,\beta>0$, can be generalized to $\alpha,\beta>-1$.
\end{exm}

\begin{exm}\label{ex-J3}
The symmetric weight function $w_{_J}(x)=(1-x^2)^{\alpha}\e^{-tx^2},\alpha>0,t\in\mathbb{R}$ was considered in \cite{MinChen22}. The associated monic orthogonal polynomials read
\[P_n(x):=x^n+q(n,t)x^{n-2}+\cdots, \quad\qquad n=2,3,\cdots,\]
with $P_0(x):=1$ and $P_1(x):=x$. Throughout this example, unless necessary, we omit the $t$-dependence of $P_n$ for notational simplicity. The following recurrence relation holds
\begin{align*}
xP_{n}(x)=P_{n+1}(x)+\beta_nP_{n-1}(x),
\end{align*}
for $n\geq0$, with $P_{-1}(x):=0$.
Using it and the orthogonality relation \eqref{dp}, we get
\begin{gather}
\frac{1}{h_n}\int_{-1}^1x^2P_n^2(x)w_{_J}(x)dx=\beta_{n+1}+\beta_{n},\label{ex8-1}\\
\frac{1}{h_{n-1}}\int_{-1}^1xP_n(x)P_{n-1}(x)w_{_J}(x)dx=\beta_n.\label{ex8-2}
\end{gather}
According to the facts that $P_n(-x)=(-1)^nP_n(x)$ and $w_{_J}(x)$ is even, in view of parity, we find
\begin{align}\label{ex8-3}
\frac{1}{h_n}\int_{-1}^1xP_n^2(x)w_{_J}(x)dx=0=
\frac{1}{h_{n-1}}\int_{-1}^1x^2P_n(x)P_{n-1}(x)w_{_J}(x)dx.
\end{align}
Assuming $\alpha>-1$, by plugging
\[\frac{(1-z^2)v_{_J}'(z)-(1-x^2)v_{_J}'(x)}{z-x}=-2t(x^2+zx+z^2-1)+2\alpha
\]
and $\mathbf{p}(n)=0$ into \eqref{An-J}-\eqref{Bn-J}, with the help of \eqref{ex8-1}-\eqref{ex8-3}, we obtain
\begin{equation}\label{AnBn-J-3}
\begin{aligned}
A_n(z)=2t+\frac{R_n(t)}{1-z^2},\qquad\qquad B_n(z)=\frac{z\,r_n(t)}{1-z^2},
\end{aligned}
\end{equation}
where
\begin{align}\label{defRnrn-ex8}
R_n(t):=2n+1+2\alpha-2t(\beta_{n+1}+\beta_n),\qquad\qquad r_n(t):=n-2t\beta_n.
\end{align}
We find that our expressions of $\{A_n, B_n\}$ are identical in form structure to the ones in \cite{MinChen22} where $\alpha>0$. Moreover, when $\alpha>0$, from (2.14) and (2.18) in \cite{MinChen22} which were obtained by plugging the expressions of $A_n$ and $B_n$ into $(S_1)$ and $(S_2')$, it follows that
\begin{align*}
R_n(t)=\frac{2\alpha}{h_n}\int_{-1}^1\frac{P_n^2(x)w_{_J}(x)dx}{1-x^2},\qquad\qquad r_n(t)=\frac{2\alpha}{h_{n-1}}\int_{-1}^1\frac{x}{1-x^2}P_n(x)P_{n-1}(x)w_{_J}(x)dx.
\end{align*}

To continue, we differentiate both sides of the orthogonality relations
\begin{gather*}
h_n(t)=\int_{-1}^1P_n^2(x;t)(1-x^2)^{\alpha}\e^{-tx^2}dx,\\
0=\int_{-1}^1P_n(x;t)P_{n-2}(x;t)(1-x^2)^{\alpha}\e^{-tx^2}dx,
\end{gather*}
w.r.t. $t$. According to \eqref{ex8-1} and the equality
\[\frac{1}{h_{n-2}}\int_{-1}^1x^2P_n(x;t)P_{n-2}(x;t)(1-x^2)^{\alpha}\e^{-tx^2}dx=\beta_n\beta_{n-1},\]
which results from the recurrence and orthogonality relations, we get
\begin{align}
2t\frac{d}{dt}\ln h_n(t)=&-2t(\beta_{n+1}+\beta_n)=R_n(t)-2n-1-2\alpha,\label{exJ-8-1}\\
2t\frac{d}{dt}q(n,t)=&2t\beta_n\beta_{n-1}.\label{exJ-8-2}
\end{align}
Equation \eqref{exJ-8-1} is the same as (4.1) of \cite{MinChen22} where $\alpha>0$, while \eqref{exJ-8-2} does not align with (4.2) therein since an integration by parts valid for $\alpha>0$ was applied.
However, we can derive their (4.2), i.e.
 \begin{align*}
 2t\frac{d}{dt}q(n,t)=\beta_nR_n-r_n-2q(n,t),
 \end{align*}
 from a different perspective. According to \eqref{exJ-8-2}, it suffices to show that
 \begin{align}\label{ex8-2}
2t\beta_n\beta_{n-1}=\beta_nR_n-r_n-2q(n,t).
\end{align}
From the definition of $r_n$ given in \eqref{defRnrn-ex8}, we have
 \begin{align}\label{exJ-8-3}
 \beta_n=\frac{n-r_n}{2t},
 \end{align}
so that
\begin{align}\label{exJ-8-4}
2t\beta_n\beta_{n-1}=\frac{1}{2t}(n-r_n)(n-1-r_{n-1}).
\end{align}
Inserting \eqref{AnBn-J-3} into $(S_1)$ and $(S_2')$, and by equating the coefficients of $1/(1-z^2)$ on both sides of the resulting equations, we find
\begin{gather}
r_{n+1}+r_n=R_n-2\alpha,\label{3-1}\\
2t\beta_nR_{n-1}=2(t-\alpha)r_n-r_n^2+\sum_{j=0}^{n-1}R_j-2t\beta_nR_n.\label{3-2}
\end{gather}
Using \eqref{3-1} to get rid of $r_{n-1}$ in \eqref{exJ-8-4}, in view of \eqref{exJ-8-3}, we are led to
\begin{align}\label{exJ-8-5}
2t\beta_n\beta_{n-1}=\frac{1}{2t}\left[(n-r_n)(n+2\alpha-1+r_n)- 2t\beta_nR_{n-1}\right].
\end{align}
From the definition of $R_j$ given by \eqref{defRnrn-ex8}, on account of $\sum\limits_{j=0}^{n-1}\beta_j=-q(n,t)$ and, once again, \eqref{exJ-8-3}, we obtain \begin{align}\label{3-4}
\sum_{j=0}^{n-1}R_j=n(n+2\alpha-1)+4tq(n,t)+r_n.
\end{align}
Substituting it into \eqref{3-2} and plugging the obtained expression for $2t\beta_nR_{n-1}$ into \eqref{exJ-8-5}, we finally arrive at \eqref{ex8-2}.
\\
\\

Therefore, the second order difference equations for $q(n,t)$, $\beta_n$ and the logarithmic derivative of the Hankel determinant, the second order differential equations satisfied by the latter two, and the Painlev\'{e} V equation for $R_n$, which were established in \cite{MinChen22} and valid for $\alpha>0$, can be generalized to $\alpha>-1$.
\end{exm}

\begin{exm}
The symmetric weight function $w_{_J}(x)=\left(1-x^2\right)^{\alpha}\left(1-k^2x^2\right)^{\gamma},\alpha>0,\gamma\in\mathbb{R}$ was investigated in \cite{BCH15}. One has $\mathbf{p}(n)=0$ and
\begin{align*}
\int_{-1}^1\frac{P_n^2(x)w_{_J}(x)dx}{x-\frac{1}{k}}=&-\int_{-1}^1\frac{P_n^2(x)w_{_J}(x)dx}{x+\frac{1}{k}},\\
\int_{-1}^1\frac{P_n(x)P_{n-1}(x)w_{_J}(x)dx}{x-\frac{1}{k}}=&\int_{-1}^1\frac{P_n(x)P_{n-1}(x)w_{_J}(x)dx}{x+\frac{1}{k}},
\end{align*}
which are due to the facts that $P_n(-x)=(-1)^nP_n(x)$ and $w_{_J}(x)$ is even. See also (2.5)-(2.6) of \cite{BCH15}.
Assuming $\alpha>-1$, by substituting
\[\frac{(1-z^2)v_{_J}'(z)-(1-x^2)v_{_J}'(x)}{z-x}=2(\alpha+\gamma)+\gamma\left(1-\frac{1}{k^2}\right)\left[\frac{1}{(z-\frac{1}{k})(x-\frac{1}{k})}+\frac{1}{(z+\frac{1}{k})(x+\frac{1}{k})}\right]
\]
into \eqref{An-J}-\eqref{Bn-J}, we get
\begin{align*}
A_n(z)=\frac{2(n+\alpha+\gamma)+1-\frac{2}{k}R_n^*}{1-z^2}+\frac{-\frac{2}{k}R_n^{*}}{z^2-\frac{1}{k^2}},\qquad\qquad B_n(z)=\frac{z(n+2r_n^*)}{1-z^2}+\frac{2zr_n^{*}}{z^2-\frac{1}{k^2}},
\end{align*}
where
\begin{align*}
R_n^{*}(k^2):=\frac{\gamma}{h_n}\int_{-1}^1\frac{P_n^2(x)w_{_J}(x)dx}{x+\frac{1}{k}},\qquad\qquad r_n^{*}(k^2):=\frac{\gamma}{h_{n-1}}\int_{-1}^1\frac{P_n(x)P_{n-1}(x)w_{_J}(x)dx}{x+\frac{1}{k}}.
\end{align*}
When $\alpha>0$, from (2.22)-(2.23) in \cite{BCH15} which were obtained by using $(S_1)$ and $(S_2)$, we find that their expressions of $A_n$ and $B_n$ are consistent with ours. Therefore, the second order difference equations established in \cite{BCH15} for $\beta_n$ and the coefficient of $x^{n-2}$ in $P_n(x)$ still hold for $\alpha>-1$. They can also be deduced by inserting our expressions of $A_n$ and $B_n$ into $(S_1), (S_2)$ and $(S_2')$, and we believe that the derivation is simpler than that in \cite{BCH15} where two additional auxiliary quantities were introduced alongside $R_n^*$ and $r_n^*$. In addition, as pointed out in \cite{BCH15}, the Hankel determinant generated by $w_{_J}(x)$ with $\alpha>0$ was connected with the one which will be discussed in Example \ref{exJS-2}, and its logarithmic derivative was expressed in terms of two functions satisfying the $\sigma$-form of Painlev\'{e} VI. This result can be generalized to $\alpha>-1$, based on the analysis in Example \ref{exJS-2}.
\end{exm}

We now focus on two classes of Jacobi-type orthogonal polynomials associated with symmetric weight functions involving $\e^{-t/x^2}$ or $\e^{-t/(1-x^2)}$.
\\
\\
\begin{exm}
The weight function $w_{_J}(x)=(1-x^2)^{\alpha}\e^{-t/x^2}, \alpha>0,t\geq0$ was analyzed in \cite{MinChenNPB20}. One knows that $P_n(x)$ contains only odd or even powers of $x$ when $n$ is odd or even respectively. It follows that
\vspace{-1mm}
\[\frac{1}{h_{n-1}}\int_{-1}^1 \frac{P_n(x)P_{n-1}(x)}{x}w_{_J}(x)dx=\frac{1-(-1)^n}{2}.\]
See (2.9) of \cite{MinChenNPB20}. Supposing $\alpha>-1$, by inserting
\begin{align*}
\frac{(1-z^2)v_{_J}'(z)-(1-x^2)v_{_J}'(x)}{z-x}=2\alpha+2t\left[\frac{1}{zx^3}+\frac{1}{z^2x^2}+\left(\frac{1}{z^3}-\frac{1}{z}\right)\cdot\frac{1}{x}\right]
\end{align*}
and $\mathbf{p}(n)=0$ into \eqref{An-J}-\eqref{Bn-J}, in view of the parity, we get the expressions of $A_n$ and $B_n$ which are identical to (2.5) and (2.6) of \cite{MinChenNPB20}. The differential relations given by (3.1) and (3.5) therein, valid for $\alpha>0$, continue to hold for $\alpha>-1$.

The weight function  $(1-x^2)^{\alpha}\e^{-t/(1-x^2)}, \alpha,t>0$ was considered in \cite{MinChen21}. Following a similar argument as above, we verify that the expressions of $A_n$ and $B_n$ given by (21)-(22) in \cite{MinChen21} and the differential relations (53)-(54) therein remain valid for $\alpha>-1$.

\sloppy{Therefore, the finite-dimensional results established in \cite{MinChenNPB20} and \cite{MinChen21} with $\alpha>0$ can be extended to $\alpha>-1$, including the second order difference equation and differential equation satisfied by the logarithmic derivative of the Hankel determinant, its integral representation in terms of Painlev\'{e} V transcendent, etc. In addition, as stated in \cite{MinChenNPB20} and \cite{MinChen21}, the Hankel determinants generated by the above two weights with $\alpha>0$ were connected with the one which will be discussed in Example \ref{exJS-1}, and their logarithmic derivatives were expressed in terms of two functions satisfying the $\sigma$-form of Painlev\'{e} V. These conclusions can also be generalized to $\alpha>-1$, based on the analysis in Example \ref{exJS-1}.}
\end{exm}

We close this section by presenting the ladder operators for the monic Jacobi-type orthogonal polynomials with jump discontinuities. They can be derived by using the argumentation strategy from the preceding two subsections,
\begin{theorem}
Denote the Jacobi-type weight function with jump discontinuities, i.e.
\begin{align}\label{ex7-1}
\widetilde{w}(x; \vec{t}\,):= w_{_J}(x) \left( \omega_0 + \sum_{k=1}^{m} \omega_k \theta(x - t_k) \right),
\end{align}
where $w_{_J}$ is defined by \eqref{wJ} and independent of $\vec{t}=(t_1,\cdots,t_m)$ with $-1\leq t_1<\cdots<t_m\leq1$, $\{\omega_k, 0\leq k\leq m\}$ and $\theta(\cdot)$ have the same meanings as in Theorem \ref{L-jump}.  The associated monic orthogonal polynomials satisfy the ladder operators \eqref{loJ}-\eqref{roJ} with $A_n(z)$ and $B_n(z)$ given by
\\
\\
\begin{subequations}\label{AnBn-JJ}
\begin{equation}			
\begin{aligned}	 A_n(z)=&\frac{1}{1-z^2}\left[\frac{1}{h_n}\int_{-1}^{1}\frac{(1-z^2)v_{_J}'(z)-(1-x^2)v_{_J}'(x)}{z-x}P_n^2(x; \vec{t}\,)\widetilde{w}(x; \vec{t}\,)dx\right.\\
&\left.\qquad\qquad+2n+1+\sum_{k=1}^{m}R_{n,k}^{\star}(z+t_k)\right]+\sum_{k=1}^{m}\frac{R^{\star}_{n,k}}{z-t_k}, \end{aligned}
\end{equation}
\begin{equation}
\begin{aligned}
B_n(z)=&\frac{1}{1-z^2}\left[ \frac{1}{h_{n-1}}\int_{-1}^{1}\frac{(1-z^2)v_{_J}'(z)-(1-x^2)v_{_J}'(x)}{z-x}P_n(x; \vec{t}\,)P_{n-1}(x; \vec{t}\,)\widetilde{w}(x; \vec{t}\,)dx\right.\\
&\left.\qquad\qquad+nz-\mathbf{p}(n)+\sum_{k=1}^{m}r_{n,k}^{\star}(z+t_k)\right]+\sum_{k=1}^{m}\frac{r^{\star}_{n,k}}{z-t_k},
\end{aligned}
\end{equation}
\end{subequations}
where $v_{_J}(x):=-\ln w_{_J}(x)$ and $\{R_{n,k}^{\star},r_{n,k}^{\star}, k=1,\cdots,m\}$ are defined by
\begin{equation*}		
R^{\star}_{n,k}( \vec{t}\,):=\frac{\omega_kw_{_J}(t_k)P_n^2(t_k; \vec{t}\,)}{h_n},\qquad\qquad r^{\star}_{n,k}( \vec{t}\,):=\frac{\omega_kw_{_J}(t_k)P_n(t_k; \vec{t}\,)P_{n-1}(t_k; \vec{t}\,)}{h_{n-1}}.
\end{equation*}
\end{theorem}

\begin{exm}
The probability that the interval $(-a,a), a\in(0,1),$ contains no eigenvalues of the Jacobi unitary ensemble associated with $(1-x^2)^{\alpha},\alpha>0,$ was studied in \cite{MinChen2017}. Up to a constant term, it can be evaluated as the Hankel determinant generated by the weight function $(1-x^2)^{\alpha}(1-\theta(x+a)+\theta(x-a))$ which is a special case of \eqref{ex7-1} with $w_{_J}(x)=(1-x^2)^{\alpha}, m=2, \omega_0=1,\omega_1=-1,$ $\omega_2=1, t_1=-a,t_2=a$. In this case, we have
\[\frac{(1-z^2)v_{_J}'(z)-(1-x^2)v_{_J}'(x)}{z-x}=2\alpha,\]
and
\[\mathbf{p}(n)=0,\qquad\quad R_{n,1}^{\star}(a)=-R_{n,2}^{\star}(a),\qquad\quad r_{n,1}^{\star}(a)=r_{n,2}^{\star}(a),\]
with
\[R_{n,2}^{\star}(a)=\frac{(1-a^2)^{\alpha}P_n^2(a;a)}{h_n},\qquad\qquad r_{n,2}^{\star}(a)=\frac{(1-a^2)^{\alpha}P_n(a;a)P_{n-1}(a;a)}{h_{n-1}}.\]
Hence, \eqref{AnBn-JJ} now read
\begin{align*}
A_n(z)=\frac{2n+2\alpha+1+2aR_{n,2}^{\star}}{1-z^2}+\frac{2aR_{n,2}^{\star}}{z^2-a^2},\qquad\quad B_n(z)=\frac{z\left(n+2r_{n,2}^{\star}\right)}{1-z^2}+\frac{2zr_{n,2}^{\star}}{z^2-a^2}.
\end{align*}
In \cite{MinChen2017} where $\alpha>0$, four auxiliary quantities were introduced instead of two. Their expressions of $A_n$ and $B_n$ given in Theorem 1, combined with (4)-(5) therein which were obtained through integration by parts and using $(S_1)$, agree with ours. The differential relations (19)-(20) in their work remain valid for $\alpha>-1$. Consequently, the second order differential equation deduced in \cite{MinChen2017} (c.f. (38)) for the logarithmic derivative of the gap probability with $\alpha>0$ continues to hold when $\alpha>-1$.
\end{exm}

\section{Ladder Operators for shifted Jacobi-type Orthogonal Polynomials with $\alpha,\beta>-1$}
We now focus on the monic shifted Jacobi-type orthogonal polynomials associated with \eqref{wJS}, namely,
\begin{align*}
w_{_{JS}}(x)=&x^{\alpha}(1-x)^{\beta}w_0(x),\qquad x\in[0,1],\quad\alpha,\beta>-1,
\end{align*}
where, as assumed in Introduction, $w_0(x)$ is continuously differentiable and bounded at $x=0,1$.
By adapting the derivations in Sections \ref{proof-J-RHP} and \ref{proof-J-1} to this case, we obtain the ladder operators.
\begin{theorem}\label{main-JS}
The monic shifted Jacobi-type orthogonal polynomials associated with $w_{_{JS}}(z)$ satisfy the following ladder operators:
\begin{align}
P_n'(z)=&-B_n(z)P_n(z)+\beta_nA_n(z)P_{n-1}(z),\label{loJS}\\
P_{n-1}'(z)=&\left(B_n(z)+v_{_{JS}}'(z)\right)P_{n-1}(z)-A_{n-1}P_n(z),\label{roJS}
\end{align}	
for $n\geq0$, where $v_{_{JS}}(z):=-\ln w_{_{JS}}(z)$. The quantities $A_n(z)$ and $B_n(z)$ are given by
\begin{equation}\label{An-JS}		 A_n(z):=\frac{1}{z-z^2}\left[\frac{1}{h_n}\int_{0}^{1}\frac{(z-z^2)v_{_{JS}}'(z)-(x-x^2)v_{_{JS}}'(x)}{z-x}P_n^2(x)w_{_{JS}}(x)dx+2n+1\right],
\end{equation}
for $n\geq0$ with $A_{-1}(z):=0$, and
\begin{equation}\label{Bn-JS}
\begin{aligned}		 B_n(z):=&\frac{1}{z-z^2}\left[\frac{1}{h_{n-1}}\int_{0}^{1}\frac{(z-z^2)v_{_{JS}}'(z)-(x-x^2)v_{_{JS}}'(x)}{z-x}P_n(x)P_{n-1}(x)w_{_{JS}}(x)dx\right.\\
&\left.\qquad\qquad+n(z-1)-\mathbf{p}(n)\right],
\end{aligned}
\end{equation}
for $n\geq1$ with $B_0(z):=0$.
Moreover, $A_n(z)$ and $B_n(z)$ satisfy $(S_1), (S_2)$ and $(S_2')$.
\end{theorem}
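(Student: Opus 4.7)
The plan is to adapt the two arguments developed for the Jacobi-type case in Sections~\ref{proof-J-RHP} and \ref{proof-J-1}. The crucial observation is that for $\alpha,\beta>-1$ the product
\[
x(1-x)\,w_{_{JS}}(x)=x^{\alpha+1}(1-x)^{\beta+1}w_0(x)
\]
vanishes at both endpoints of $[0,1]$, playing exactly the role of $(1-x^2)w_{_J}(x)$ in the Jacobi-type case. Consequently the factor $1-z^2$ is replaced by $z-z^2$ throughout, and the polynomial residual $nz-\mathbf{p}(n)$ by $n(z-1)-\mathbf{p}(n)$.

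For the RHP-based argument, I would first establish the shifted Jacobi analog of Lemma~\ref{lemma-J}. The key identity is
\[
\frac{1}{(x-z)^2}+\frac{1}{z-z^2}=-\frac{d}{dx}\!\left(\frac{x(1-x)}{(x-z)(z-z^2)}\right),
\]
combined with the fact that the added constant $1/(z-z^2)$ integrates to zero against $P_{n-1}(x)P_n(x)w_{_{JS}}(x)$ by orthogonality. Integration by parts is legitimate since $x(1-x)w_{_{JS}}(x)$ vanishes at $x=0,1$. Using the decomposition $\frac{x(1-x)}{(x-z)(z-z^2)}=\frac{1}{x-z}+\frac{1-x-z}{z-z^2}$, together with the analog of \eqref{R5-2} obtained from $xP_n'(x)=nP_n(x)-\mathbf{p}(n)P_{n-1}(x)+\cdots$, namely
\[
\int_0^1(1-x-z)P_n'(x)P_{n-1}(x)w_{_{JS}}(x)\,dx=\bigl(n(1-z)+\mathbf{p}(n)\bigr)h_{n-1},
\]
one produces the desired expression for $C(P_{n-1}P_n w_{_{JS}})'$, together with its counterpart for $C(P_n^2 w_{_{JS}})'$ in which the constant $2n+1$ emerges from $\int_0^1 xP_n(x)P_n'(x)w_{_{JS}}(x)\,dx=nh_n$.

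Next, mirroring the calculation that produced \eqref{R11}--\eqref{r21}, I would assemble the four entries of $R(z)=Y'(z)Y^{-1}(z)$ and substitute them into \eqref{p'nyr}--\eqref{pn-1'y}. The raw coefficients of $P_n$ and $P_{n-1}$ differ from \eqref{An-JS}--\eqref{Bn-JS} by additive terms proportional to $\frac{v_{_{JS}}'(z)}{h_n}C(P_n^2 w_{_{JS}})$ and $\frac{v_{_{JS}}'(z)}{h_{n-1}}C(P_{n-1}P_n w_{_{JS}})$; these are eliminated, exactly as in the passage following \eqref{Pn'1}, using property \eqref{Cpn} together with $\beta_n=h_n/h_{n-1}$. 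The raising operator \eqref{roJS} then follows from \eqref{loJS}, \eqref{3r}, and $(S_1)$. Alternatively, as in Section~\ref{proof-J-1}, one can expand $(z-z^2)P_n'(z)$ as a linear combination of $P_0(z),\ldots,P_{n+1}(z)$, reading off the coefficients of $P_{n+1}$ and $P_n$ by leading-order inspection and computing the coefficient of $P_{n-1}$ by integration by parts using the vanishing of $x(1-x)w_{_{JS}}$ at the endpoints, with Christoffel--Darboux \eqref{cd} assembling the intermediate terms.

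The compatibility condition $(S_1)$ would then be verified by substituting \eqref{An-JS}--\eqref{Bn-JS} directly, combining the two integrals via $\frac{P_{n+1}(x)}{h_n}+\frac{P_{n-1}(x)}{h_{n-1}}=\frac{(x-\alpha_n)P_n(x)}{h_n}$, and reducing the residual $\frac{1}{h_n}\int_0^1(x-x^2)v_{_{JS}}'(x)P_n^2(x)w_{_{JS}}(x)\,dx$ to a form matching $-v_{_{JS}}'(z)$ by one more integration by parts (again relying on $\alpha,\beta>-1$ for the boundary terms) and on $\alpha_n=\mathbf{p}(n)-\mathbf{p}(n+1)$; conditions $(S_2)$ and $(S_2')$ follow as in Remark~\ref{lr-r}. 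The main difficulty, in my view, is bookkeeping: the interval $[0,1]$ is no longer symmetric about its center, so the substitutions $1-x^2\leftrightarrow x-x^2$ generate asymmetric partial-fraction decompositions (with $1-x-z$ in place of $x+z$), and one must take care to track signs correctly and to verify that the spurious $v_{_{JS}}'(z)\cdot C(P_n w_{_{JS}})$ contributions in $A_n$ and $B_n$ cancel out when combined through $\beta_n=h_n/h_{n-1}$.
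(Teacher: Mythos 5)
Your proposal is correct and follows essentially the same route as the paper: the paper's own (largely omitted) proof rests on exactly the decomposition $\frac{1}{(x-z)^2}+\frac{1}{z-z^2}=-\frac{d}{dx}\bigl(\frac{x-x^2}{(x-z)(z-z^2)}\bigr)$ with the boundary terms killed by $x^{\alpha+1}(1-x)^{\beta+1}w_0(x)$ vanishing at $0,1$, the same evaluation $\int_0^1(1-x-z)P_n'P_{n-1}w_{_{JS}}\,dx=(n(1-z)+\mathbf{p}(n))h_{n-1}$ producing the $n(z-1)-\mathbf{p}(n)$ residual, and the same assembly of $R(z)$ into \eqref{p'nyr}--\eqref{pn-1'y}, with the expansion of $(z-z^2)P_n'(z)$ offered as the alternative derivation. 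The only cosmetic gap is that the $2n+1$ in \eqref{An-JS} comes from $2n$ (via $\int xP_nP_n'w_{_{JS}}\,dx=nh_n$) \emph{plus} the extra $h_n/(z-z^2)$ contributed by the inserted constant, which does not vanish against $P_n^2$ as it does against $P_{n-1}P_n$.
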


\begin{remark}
When $\alpha,\beta>0$, using reasoning analogous to Remark \ref{re-J-1}, we find that \eqref{An-JS}-\eqref{Bn-JS} agree with \eqref{a-n}-\eqref{b-n}.
\end{remark}	

We construct the above ladder operators through the RHP, relying on the following lemma.
\begin{lemma} We have
\begin{align}		 C(P_{n-1}P_nw_{_{JS}})'=&C(P_{n-1}'P_nw_{_{JS}})+C(P_{n-1}P_n'w_{_{JS}})\nonumber\\		 +\frac{1}{z^2-z}&\left[\int_{0}^{1}\frac{(x-x^2)v_{_{JS}}'(x)}{x-z}P_{n-1}(x)P_n(x)w_{_{JS}}(x)dx+\left(n(z-1)-\mathbf{p}(n)\right)h_{n-1}\right],\label{C'-JS-1}\\
C(P_n^2w_{_{JS}})'=2C&(P_nP_n'w)+\frac{1}{z^2-z}\left[\int_{0}^{1}\frac{(x-x^2)v_{_{JS}}'(x)}{x-z}P_n^2(x)w_{_{JS}}(x)dx+(2n+1)h_n\right],\nonumber
\end{align}
and the four elements of $R(z):=Y'(z)Y^{-1}(z)$ read
\begin{align*}		
R_{1,1}(z)=&-R_{2,2}(z)\\
=&\frac{1}{z^2-z}\left[\frac{1}{h_{n-1}}\int_{0}^{1}\frac{(x-x^2)v_{_{JS}}'(x)}{x-z}P_{n-1}(x)P_n(x)w_{_{JS}}(x)dx+n(z-1)-\mathbf{p}(n)\right],\\
R_{1,2}(z)=&\frac{1}{2\pi i}\cdot \frac{1}{z^2-z}\left[\int_{0}^{1}\frac{(x-x^2)v_{_{JS}}'(x)}{x-z}P_n^2(x)w_{_{JS}}(x)dx+(2n+1)h_n\right],\\
R_{2,1}(z)=&\frac{-2\pi i}{h_{n-1}^2}\cdot\frac{1}{z^2-z}\left[\int_{0}^{1}\frac{(x-x^2)v_{_{JS}}'(x)}{x-z}P_{n-1}^2(x)w_{_{JS}}(x)dx+(2n-1)h_{n-1}\right].
\end{align*}
\end{lemma}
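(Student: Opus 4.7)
The plan is to mirror, almost verbatim, the derivation of Lemma 4.2 together with the preceding $R_{i,j}$ computation in Section 4, the only substitution being to replace the factor $1-x^2$ by $x(1-x)=x-x^2$, which is exactly what vanishes at the two endpoints of $[0,1]$ after multiplication by $w_{_{JS}}$. Concretely, $x(1-x)w_{_{JS}}(x)=-x^{\alpha+1}(1-x)^{\beta+1}w_0(x)$ and $\alpha+1,\beta+1>0$ for $\alpha,\beta>-1$, so every boundary term produced by the relevant integrations by parts is zero.

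For $C(P_{n-1}P_nw_{_{JS}})'$, I would differentiate under the integral sign to obtain $\int_0^1 P_{n-1}P_nw_{_{JS}}/(x-z)^2\,dx$, then use $\int_0^1 P_{n-1}P_nw_{_{JS}}\,dx=0$ from \eqref{dp} to rewrite the integrand as
\[
\frac{1}{(x-z)^2}+\frac{1}{z-z^2}=\frac{d}{dx}\!\left(\frac{x(x-1)}{(z-z^2)(x-z)}\right),
\]
the equality being verified by the polynomial division $\frac{x^2-x}{x-z}=(x+z-1)-\frac{z-z^2}{x-z}$. Integrating by parts kills the boundary term for the reason above, and the antiderivative splits as $\frac{x+z-1}{z-z^2}-\frac{1}{x-z}$. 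The $-\frac{1}{x-z}$ pieces immediately produce $C(P_{n-1}'P_nw_{_{JS}})+C(P_{n-1}P_n'w_{_{JS}})$ together with a $v_{_{JS}}'$ contribution coming from $w_{_{JS}}'=-v_{_{JS}}'w_{_{JS}}$. In the polynomial piece $\frac{x+z-1}{z-z^2}$, orthogonality annihilates the $P_{n-1}'P_n$ integral (the integrand against $w_{_{JS}}$ is $P_n$ times a polynomial of degree $\leq n-1$), while the expansion $(x+z-1)P_n'(x)=nP_n(x)+[n(z-1)-\mathbf{p}(n)]P_{n-1}(x)+\{\text{lower order}\}$ reduces the $P_{n-1}P_n'$ integral to $[n(z-1)-\mathbf{p}(n)]h_{n-1}$. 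Finally, the algebraic identity $\frac{x-x^2}{(x-z)(z^2-z)}=\frac{x+z-1}{z-z^2}-\frac{1}{x-z}$ fuses the two $v_{_{JS}}'$ contributions into the single integral with kernel $\frac{(x-x^2)v_{_{JS}}'(x)}{x-z}$ that appears in \eqref{C'-JS-1}. The identity for $C(P_n^2w_{_{JS}})'$ follows by the same argument, except that $\int_0^1 P_n^2w_{_{JS}}\,dx=h_n$ contributes an extra $\frac{h_n}{z^2-z}$ at the initial rewrite and $\int_0^1 (x+z-1)P_n'P_nw_{_{JS}}\,dx=nh_n$ from the polynomial piece; the two combine with the $2C(P_n'P_nw_{_{JS}})$ term to produce the announced coefficient $2n+1$.

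With the two Cauchy transform derivative identities in hand, the four entries of $R(z)=Y'Y^{-1}$ fall out of \eqref{rmatrixc} by routine manipulation. I would invoke $\det Y\equiv 1$, equivalently $\mathrm{tr}\,R=0$, for $R_{2,2}=-R_{1,1}$, and apply \eqref{Cpn} repeatedly to recast every product $P_k^{(j)}(z)C(P_\ell w_{_{JS}})$ as a Cauchy transform of a polynomial times $w_{_{JS}}$ (the needed derivative identities come from differentiating $P_k(z)C(P_\ell w_{_{JS}})=C(P_kP_\ell w_{_{JS}})$ for $k\leq\ell$). Substituting the lemma's derivative identities and collecting terms yields the expressions for $R_{1,1}$ and $R_{1,2}$, while $R_{2,1}$ follows from $R_{1,2}$ by the symmetry $n\mapsto n-1$, exactly as in the Laguerre case. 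The main obstacle is simply the bookkeeping around the splitting $\frac{x(x-1)}{(z-z^2)(x-z)}=\frac{x+z-1}{z-z^2}-\frac{1}{x-z}$: once this decomposition and the boundary-vanishing check for $\alpha,\beta>-1$ are in place, everything else reduces to polynomial expansion together with the orthogonality relations \eqref{dp}--\eqref{or-2}.
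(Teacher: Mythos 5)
Your proposal is correct and follows essentially the same route as the paper, which itself only sketches this lemma by noting that one integrates $P_{n-1}P_nw_{_{JS}}$ against $-d\bigl(\tfrac{x^2-x}{(x-z)(z^2-z)}\bigr)$ and then proceeds exactly as in Lemma 4.2 and Lemma 3.3; your decomposition $\tfrac{x(x-1)}{(z-z^2)(x-z)}=\tfrac{x+z-1}{z-z^2}-\tfrac{1}{x-z}$, the boundary-vanishing check for $\alpha,\beta>-1$, the evaluation $\int_0^1(x+z-1)P_n'P_{n-1}w_{_{JS}}\,dx=\bigl(n(z-1)-\mathbf{p}(n)\bigr)h_{n-1}$, and the $n\mapsto n-1$ symmetry for $R_{2,1}$ all match. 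The only blemish is the stray minus sign in ``$x(1-x)w_{_{JS}}(x)=-x^{\alpha+1}(1-x)^{\beta+1}w_0(x)$'' (correct for $x(x-1)$, not $x(1-x)$), which does not affect the argument.
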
	
\begin{proof} By definition of $C(P_{n-1}P_nw_{_{JS}})$, we get
\begin{align*}
C(P_{n-1}P_nw_{_{JS}})' &=\frac{d}{dz}\int_{0}^{1}\frac{P_{n-1}(x)P_n(x)w_{_{JS}}(x)}{x-z}dx =\int_{0}^{1}\frac{P_{n-1}(x)P_n(x)w_{_{JS}}(x)}{(x-z)^2}dx \\		 &=\int_{0}^{1}P_{n-1}(x)P_n(x)w_{_{JS}}(x)\left(\frac{1}{(x-z)^2}+\frac{1}{z-z^2}\right)dx \\		 &=-\int_{0}^{1}P_{n-1}(x)P_n(x)w_{_{JS}}(x)\,d\left(\frac{1}{x-z}+\frac{x+z-1}{z^2-z}\right)\\
&=-\int_{0}^{1}P_{n-1}(x)P_n(x)w_{_{JS}}(x)\,d\left(\frac{x^2-x}{(x-z)(z^2-z)}\right).
\end{align*}
Through integration by parts and in view of the orthogonality relations \eqref{dp}-\eqref{or-1}, we obtain \eqref{C'-JS-1}. The detailed derivation of it, along with the remaining proof of this lemma, parallels that in Lemma \ref{lemma-J} and Lemma \ref{R-L}, and is thus omitted.
\end{proof}

Substituting the expressions of $\{R_{1,1},R_{1,2},R_{2,1},R_{2,2}\}$ into \eqref{p'nyr}-\eqref{pn-1'y} leads us to Theorem \ref{main-JS}. The ladder operators can also be deduced by writing
\begin{equation}\label{lo-JS-1}		 (z-z^2)P_n'(z)=-nP_{n+1}(z)+\left(n+n\textbf{p}(n+1)-(n-1)\textbf{p}(n)\right)P_n(z)+\sum_{k=0}^{n-1}\widehat{c}_{n,k}P_k(z),
	\end{equation}
where
\begin{equation*}	 \widehat{c}_{n,k}=\frac{1}{h_k}\int_{0}^{1}(x-x^2)P_n'(x)P_k(x)w_{_{JS}}(x)dx,\qquad k=0,1,\cdots,n-1.
\end{equation*}
The subsequent proof can be carried out by arguments similar to those in Section \ref{proof-J-1}.

\begin{exm}
Consider the monic shifted Jacobi polynomials orthogonal w.r.t. $w_{_{JS}}(x)=x^\alpha(1-x)^\beta,\alpha,\beta>-1,x \in [0,1]$. We get
	\begin{equation}		 \frac{(z-z^2)v_{JS}'(z)-(x-x^2)v_{JS}'(x)}{z-x}=\alpha+\beta.\nonumber
	\end{equation}
Inserting it into (5.3)-(5.4) yields
	\begin{equation}		 A_n(z)=\frac{2n+1+\alpha+\beta}{z-z^2},\qquad\qquad\quad B_n(z)=\frac{n(z-1)-\mathbf{p}(n)}{z-z^2}.\nonumber
	\end{equation}
Substituting them into ($S_2'$) and multiplying both sides of the obtained equation by $(z-z^2)^2$ gives us
	\begin{align}		 \left(\mathbf{p}(n)+n(1-z)\right)&\left(\mathbf{p}(n)+(n+\alpha+\beta)(1-z)-\beta\right)+n(n+\alpha+\beta)\cdot z(1-z)\nonumber\\		 &\qquad\qquad=(2n-1+\alpha+\beta)(2n+1+\alpha+\beta)\beta_n .\nonumber
	\end{align}
	Setting $z=0,1$ in the above identity leads us to
	\begin{align}
		\label{z=0}		 \left(\mathbf{p}(n)+n\right)\left(\mathbf{p}(n)+n+\alpha\right)&=(2n-1+\alpha+\beta)(2n+1+\alpha+\beta)\beta_n\\		 \label{z=1}		 &=\mathbf{p}(n)\left(\mathbf{p}(n)-\beta\right).\nonumber
	\end{align}
	It follows from the second identity that
	\begin{equation}
		 \mathbf{p}(n)=-\frac{n(n+\alpha)}{2n+\alpha+\beta}\nonumber.
	\end{equation}
Plugging it into (1.9), i.e. $\alpha_n=\mathbf{p}(n)-\mathbf{p}(n+1)$, and \eqref{z=0} results in
	\begin{align*}		 \alpha_n&=\frac{2n(n+\alpha+\beta+1)+(\alpha+\beta)(\alpha+1)}{(2n+\alpha+\beta)(2n+2+\alpha+\beta)},\\		 \beta_n&=\frac{n(n+\alpha)(n+\beta)(n+\alpha+\beta)}{(2n+\alpha+\beta)^2(2n+1+\alpha+\beta)(2n-1+\alpha+\beta)},
\end{align*}
which agree with \eqref{cJ-sJ-3r}.
\end{exm}

 We now make use of \eqref{An-JS}-\eqref{Bn-JS} to compute $A_n$ and $B_n$ for two classes of shifted Jacobi-type orthogonal polynomials and differentiate the orthogonality relations
 \begin{gather}	
 \int_{0}^{1}P_n^2(x)w_{_{JS}}(x)dx=h_n,\label{or-Pn2-JS}\\
 \int_{0}^{1}P_n(x)P_{n-1}(x)w_{_{JS}}(x)dx=0.\label{or-Pnn-1-JS}
\end{gather}
Following the reasoning presented just before Example \ref{ex-2-L}, if the obtained expressions of $\{A_n,B_n\}$ and the differential identities are consistent with those for $\alpha,\beta>0$, then the finite-dimensional results established in prior work by using the ladder operator framework, valid for $\alpha,\beta>0$, can be generalized to $\alpha,\beta>-1$.
	
\begin{exm}\label{exJS-1}
The Pollaczek-Jacobi weight $w_{_{JS}}(x)=x^{\alpha}(1-x)^{\beta}\e^{-t/x},x\in[0,1],\alpha,\beta>0$, $t\geq0$ was studied in \cite{CD10}. Assuming $\alpha,\beta>-1$, by plugging
\begin{align*}
\frac{(z-z^2)v_{_{JS}}'(z)-(x-x^2)v_{_{JS}}'(x)}{z-x}=\frac{t}{xz}+\alpha+\beta
\end{align*}
into \eqref{An-JS}-\eqref{Bn-JS}, we get
\begin{align*}
A_n(z)=\frac{R_n^*}{z^2}+\frac{2n+1+\alpha+\beta+R_n^*}{z(1-z)},\qquad\qquad B_n(z)=\frac{r_n^*}{z^2}+\frac{r_n^*-\mathbf{p}(n)-n}{z}+\frac{r_n^*-\mathbf{p}(n)}{1-z},
\end{align*}
where
\begin{align*}
&R_n^*(t):=\frac{t}{h_n}\int_{0}^1 P_n^2(x)w_{_{JS}}(x)\frac{dx}{x},& &r_n^*(t):=\frac{t}{h_{n-1}}\int_{0}^1P_{n}(x)P_{n-1}(x)w_{_{JS}}(x)\frac{dx}{x}.
\end{align*}
According to (2.25) and (2.41) in \cite{CD10} which were obtained by using $(S_1)$ and $(S_2)$, we find that their expressions of $A_n$ and $B_n$ coincide with ours.
In addition, the differential relations (4.1) and (4.7) in their work are still valid for $\alpha,\beta>-1$. Therefore, the $\sigma$-form of Painlev\'{e} V and the second order difference equation satisfied by the logarithmic derivative of the associated Hankel determinant, which were established in \cite{CD10} and valid for $\alpha,\beta>0$, still hold for $\alpha,\beta>-1$. These results could also be deduced by substituting our expressions of $A_n$ and $B_n$ into $(S_1), (S_2)$ and $(S_2')$, and the derivation is likely to be more straightforward than that in \cite{CD10} where two additional auxiliary quantities were needed alongside $R_n^*$ and $r_n^*$.
\end{exm}

\begin{exm}\label{exJS-2}
The shifted Jacobi-type weight $w_{_{JS}}(x)=x^{\alpha}(1-x)^{\beta}(x-t)^{\gamma}, x\in[0,1],$ $\alpha,\beta>0, t<0,\gamma\in\mathbb{R}$ was considered in \cite{DZ10}. Supposing $\alpha,\beta>-1$, by inserting
\[\frac{(z-z^2)v_{_{JS}}'(z)-(x-x^2)v_{_{JS}}'(x)}{z-x}=\alpha +\beta +\gamma
   +\frac{\gamma\, t (1-t)
}{(z-t)(x-t)}
\]
into \eqref{An-JS}-\eqref{Bn-JS}, we readily get
\begin{align}
A_n(z)=&\frac{1}{z-z^2}\left[2n+1+\alpha+\beta+\gamma+\frac{t(1-t)}{z-t}a_n\right]\nonumber\\
=:&\frac{\mathcal{R}_n^*}{z}-\frac{\mathcal{R}_n}{z-1}+\frac{\mathcal{R}_n-\mathcal{R}_n^*}{z-t},\label{Anex13}\\
B_n(z)=&\frac{1}{z-z^2}\left[nz-n-\mathbf{p}(n)+\frac{t(1-t)}{z-t}b_n\right]\nonumber\\
=:&\frac{\texttt{r}_n^*}{z}-\frac{\texttt{r}_n}{z-1}+\frac{\texttt{r}_n-\texttt{r}_n^*-n}{z-t},\label{Bnex13}
\end{align}
where
\begin{align*}	 a_n(t):=&\frac{\gamma}{h_n}\int_0^1\frac{P_n^2(x)w_{_{JS}}(x)}{x-t}dx,\\
b_n(t):=&\frac{\gamma}{h_{n-1}}\int_0^1\frac{P_n(x)P_{n-1}(x)w_{_{JS}}(x)}{x-t}dx,
\end{align*}
and
\begin{align}	
\mathcal{R}_n^*(t):=&2n+1+\alpha+\beta+\gamma+(t-1)a_n,\label{R_n^*}\\
\mathcal{R}_n(t):=&2n+1+\alpha+\beta+\gamma+ta_n,\label{R_n}\\
\texttt{r}_n^*(t):=&-n-\mathbf{p}(n)+(t-1)b_n,\label{r_n^*}\\
\texttt{r}_n(t):=&tb_n-\mathbf{p}(n).\label{r_n}
\end{align}
Our expressions of $A_n$ and $B_n$ given by \eqref{Anex13}-\eqref{Bnex13} are identical in form to those in \cite{DZ10}. Moreover, from \eqref{r_n^*}-\eqref{r_n}, we readily see that
\begin{align}\label{diffRnrn}
b_n =\texttt{r}_n-\texttt{r}_n^*-n.
\end{align}
Inserting it into \eqref{r_n} gives us
\begin{align}\label{p-ex13}
\mathbf{p}(n)=(t-1)\texttt{r}_n-t\texttt{r}_n^*-nt.
\end{align}

In addition, by differentiating the orthogonality relations \eqref{or-Pn2-JS}-\eqref{or-Pnn-1-JS} w.r.t. $t$, we find
\[\frac{d}{dt}\ln h_n(t)=-a_n,\qquad\qquad \frac{d}{dt}\mathbf{p}(n,t)=b_n.\]
Hence, in view of \eqref{R_n} and \eqref{diffRnrn}, we have
\begin{align}
t\frac{d}{dt}\ln h_n(t)=&2n+1+\alpha+\beta+\gamma-\mathcal{R}_n,\label{Dh-ex13}\\ \frac{d}{dt}\mathbf{p}(n,t)=&\texttt{r}_n-\texttt{r}_n^*-n.\label{Dp-ex13}
\end{align}
Combining \eqref{Dh-ex13} with the fact that $D_n(t)=\prod_{j=0}^{n-1}h_j(t)$, and \eqref{Dp-ex13} with \eqref{p-ex13}, we are led to
\begin{align}
t\frac{d}{dt}\ln D_n(t)=&n(n+\alpha+\beta+\gamma)-\sum_{j=0}^{n-1}\mathcal{R}_j,\\
t\frac{d}{dt}\texttt{r}_n^*=&(t-1)\frac{d}{dt}\texttt{r}_n,
\end{align}
which are of the same form as (3.45) and (3.44) of \cite{DZ10} respectively.
Since the main results in \cite{DZ10} were established by using (3.44)-(3.45) and the expressions obtained by substituting $A_n$ and $B_n$ into $(S_1), (S_2)$ and $(S_2')$, we conclude that the $\sigma$-form of Painlev\'{e} VI deduced in \cite{DZ10} to characterize the associated Hankel determinant with $\alpha,\beta>0$ can be directly generalized to $\alpha,\beta>-1$.

\sloppy {To close this example, we point out that, when $\alpha,\beta>0$, our auxiliary quantities $\{\mathcal{R}_n^*,\mathcal{R}_n,\texttt{r}_n^*,\texttt{r}_n\}$ defined by \eqref{R_n^*}-\eqref{r_n} are identical to  $\{R_n^*, R_n,r_n^*,r_n\}$ given by (3.6)-(3.9) in \cite{DZ10}, namely,
\begin{align*}
\mathcal{R}_n^*=&\frac{\alpha}{h_n}\int_0^1P_n^2(x)w_{_{JS}}(x)\frac{dx}{x}=:R_n^*,\\
\mathcal{R}_n=&\frac{\beta}{h_n}\int_0^1P_n^2(x)w_{_{JS}}(x)\frac{dx}{1-x}=:R_n,\\
\texttt{r}_n^*=&\frac{\alpha}{h_{n-1}}\int_0^1P_n(x)P_{n-1}(x)w_{_{JS}}(x)\frac{dx}{x}=:r_n^*,\\
\texttt{r}_n=&\frac{\beta}{h_{n-1}}\int_0^1P_n(x)P_{n-1}(x)w_{_{JS}}(x)\frac{dx}{1-x}=:r_n.
\end{align*}
Indeed, when $\alpha,\beta>0$, through integration by parts, it was shown in \cite{DZ10} (c.f. (3.12)-(3.13)) that
\begin{align*}
a_n=&R_n-R_n^*,&
 b_n=&r_n-r_n^*-n.
\end{align*}
Inserting them into \eqref{R_n^*}-\eqref{r_n}, in view of (3.16) and (3.35) in \cite{DZ10} which were obtained from $(S_1)$ and $(S_2)$ respectively,  we come to the desired relations.}
\end{exm}

Following reasoning analogous to Section \ref{proof-J-RHP} or Section \ref{proof-J-1} with the aid of \eqref{lo-JS-1}, we establish the ladder operators for the monic shifted Jacobi-type orthogonal polynomials with several jump discontinuities or a Fisher-Hartwig singularity with simultaneous jump and root-type behavior.
	
\begin{theorem}
The monic orthogonal polynomials associated with
\begin{align}\label{wJS-Jumps}
w(x; \vec{t}\,) := w_{_{JS}}(x) \left( \omega_0 + \sum_{k=1}^{m} \omega_k \theta(x - t_k) \right),
\end{align}
where $w_{_{JS}}(x)$ is defined by \eqref{wJS} and independent of $\vec{t}=(t_1,\cdots,t_m)$ with $0\leq t_1<\cdots<t_m\leq1$, $\{\omega_k, 0\leq k\leq m\}$ and $\theta(\cdot)$ have the same meanings as in Theorem \ref{L-jump}, satisfy the ladder operators \eqref{loJS}-\eqref{roJS} with $A_n$ and $B_n$ reading
\begin{subequations}\label{AnBn-JS-3}
\begin{equation}
\begin{aligned}	 A_n(z)&=\frac{1}{z-z^2}\left[\frac{1}{h_n}\int_{0}^{1}\frac{(z-z^2)v_{_{JS}}'(z)-(x-x^2)v_{_{JS}}'(x)}{z-x}P_n^2(x;\vec{t}\,)w(x;\vec{t}\,)dx\right.\\
&\left.\qquad\qquad\quad+2n+1+\sum_{k=1}^{m}\frac{(t_k-t_k^2)\widehat {R}_{n,k}(\vec{t}\,)}{z-t_k}\right],\\
\end{aligned}
\end{equation}
\begin{equation}
\begin{aligned}
B_n(z)&=\frac{1}{z-z^2}\left[ \frac{1}{h_{n-1}}\int_{0}^{1}\frac{(z-z^2)v_{_{JS}}'(z)-(x-x^2)v_{_{JS}}'(x)}{z-x}P_n(x;\vec{t}\,)P_{n-1}(x;\vec{t}\,)w(x;\vec{t}\,)dx\right.\\
&\left.\qquad\qquad\quad+n(z-1)-\mathbf{p}(n)+\sum_{k=1}^{m}\frac{(t_k-t_k^2)\widehat{r}_{n,k}(\vec{t}\,)}{z-t_k}\right],
\end{aligned}
\end{equation}
\end{subequations}
where
\begin{equation*}
\widehat{R}_{n,k}(\vec{t}\,):=\frac{\omega_kw_{_{JS}}(t_k)P_n^2(t_k;\vec{t}\,)}{h_n},\qquad\qquad\widehat{r}_{n,k}(\vec{t}\,):=\frac{\omega_kw_{_{JS}}(t_k)P_n(t_k;\vec{t}\,)P_{n-1}(t_k;\vec{t}\,)}{h_{n-1}}.
\end{equation*}
\end{theorem}

\begin{exm}
When $m=1$ and $w_{_{JS}}(x)=x^{\alpha}(1-x)^{\beta}$ in \eqref{wJS-Jumps}, by substituting
\[\frac{(z-z^2)v_{_{JS}}'(z)-(x-x^2)v_{_{JS}}'(x)}{z-x}=\alpha+\beta\]
into \eqref{AnBn-JS-3}, on account of
\[\frac{t_1-t_1^2}{z(1-z)(z-t_1)}=\frac{1}{z-t_1}+\frac{t_1-1}{z}+\frac{t_1}{1-z},\]
we find
\begin{align*}
A_n(z)=&\frac{2n+1+\alpha+\beta+(t_1-1)\widehat{R}_{n,1}}{z}+\frac{2n+1+\alpha+\beta+t_1\widehat{R}_{n,1}}{1-z}+\frac{\widehat{R}_{n,1}}{z-t_1},\\
B_n(z)=&\frac{-n-\mathbf{p}(n)+(t_1-1)\widehat{r}_{n,1}}{z}+\frac{-\mathbf{p}(n)+t_1\widehat{r}_{n,1}}{1-z}+\frac{\widehat{r}_{n,1}}{z-t_1},
\end{align*}
where
\begin{equation*}
\widehat{R}_{n,1}(t_1):=\frac{\omega_1t_1^{\alpha}(1-t_1)^{\beta}P_n^2(t_1;t_1)}{h_n},\qquad\qquad\widehat{r}_{n,1}(t_1):=\frac{\omega_1t_1^{\alpha}(1-t_1)^{\beta}P_n(t_1;t_1)P_{n-1}(t_1;t_1)}{h_{n-1}}.
\end{equation*}
This case with $\alpha,\beta>0$ was studied in \cite{ChenZhang10}. Their expressions of $A_n$ and $B_n$, combined with $(45)$ and $(47)$ which were deduced in their work by using $(S_1)$ and $(S_2)$,  agree with ours. The differential relations (19) and (24) in \cite{ChenZhang10} still hold for $\alpha,\beta>-1$. Therefore, the $\sigma$-form of Painlev\'{e} VI established in \cite{ChenZhang10} for the logarithmic derivative of the associated Hankel determinant with $\alpha,\beta>0$ remains valid for $\alpha,\beta>-1$.
\end{exm}

\begin{theorem}
The monic shifted Jacobi-type polynomials orthogonal w.r.t. the weight function
\[w(x;t)=w_{_{JS}}(x)|x-t|^{\gamma}(A+B\theta(x-t)),\qquad x,t\in[0,1],\]
with $w_{_{JS}}(x)$ defined by \eqref{wJS} and independent of $t$, satisfy the ladder operators \eqref{loJS}-\eqref{roJS} with $A_n$ and $B_n$ given by
\begin{align*}		 A_n(z)=&\frac{1}{z(1-z)}\left[\frac{1}{h_n}\int_{0}^{1}\frac{(z-z^2)v_{_{JS}}'(z)-(x-x^2)v_{_{JS}}'(x)}{z-x}P_n^2(x;t)w(x;t)dx+2n+1\right.\nonumber\\
&\left.\qquad\qquad+\frac{\gamma}{h_n}\int_{0}^{1}\frac{x-x^2}{(z-x)(x-t)}P_n^2(x;t)w(x;t)dx\right],\nonumber\\		 B_n(z)=&\frac{1}{z(1-z)}\left[\frac{1}{h_{n-1}}\int_{0}^{1}\frac{(z-z^2)v_{_{JS}}'(z)-(x-x^2)v_{_{JS}}'(x)}{z-x}P_{n-1}(x;t)P_n(x;t)w(x;t)dx\right.\nonumber\\
&\left.\qquad\qquad+n(z-1)-\mathbf{p}(n)+\frac{\gamma}{h_{n-1}}\int_{0}^{1}\frac{x-x^2}{(z-x)(x-t)}P_{n-1}(x;t)P_n(x;t)w(x;t)dx\right],
\end{align*}
where $v_{_{JS}}(x):=-\ln w_{_{JS}}(x)$.
\end{theorem}

\begin{exm}\label{JS-FH}
For $w_{_{JS}}(x)=x^{\alpha}(1-x)^{\beta}$ with $\alpha,\beta>-1$ in the above weight function $w(x;t)$, by inserting
\[\frac{(z-z^2)v_{_{JS}}'(z)-(x-x^2)v_{_{JS}}'(x)}{z-x}=\alpha+\beta\]
into the expressions of $A_n$ and $B_n$, in view of
\[\frac{x-x^2}{\left(z-z^2\right)(z-x) (x-t) }=\frac{1}{1-z}\left(1+\frac{t}{x-t}\right)+\frac{1}{z}\left(1+\frac{t-1}{x-t}\right)+\frac{1}{(z-x)(x-t)},\]
we get
\begin{align}\label{AnBn-ex15}
A_n(z)=&\frac{\emph{x}_n}{z}-\frac{\mathcal{R}_n}{z-1}+a_n,&
B_n(z)=&\frac{\emph{y}_n}{z}-\frac{\texttt{r}_n}{z-1}+b_n,
\end{align}
where
\begin{align}
\emph{x}_n(t):=&2n+1+\alpha+\beta+\gamma+(t-1)u_n,&\;\emph{y}_n(t):=&-n-\mathbf{p}(n)+(t-1)v_n,\label{xy-ex15}\\
\mathcal{R}_n(t):=&2n+1+\alpha+\beta+\gamma+tu_n,&\;\texttt{r}_n(t):=&-\mathbf{p}(n)+tv_n,\label{Rr-ex15}\\
a_n(t):=&\frac{\gamma}{h_n}\int_0^{1}\frac{P_n^2(x;t)w(x;t)}{(z-x)(x-t)}dx,&\; b_n(t):=&\frac{\gamma}{h_{n-1}}\int_0^{1}\frac{P_n(x;t)P_{n-1}(x;t)}{(z-x)(x-t)}w(x;t)dx,\nonumber
\end{align}
and
\begin{align*}
u_n(t):=&\frac{\gamma}{h_n}\int_{0}^1\frac{P_n^2(x;t)}{x-t}w(x;t)dx,& v_n(t):=&\frac{\gamma}{h_{n-1}}\int_{0}^1\frac{P_n(x;t)P_{n-1}(x;t)}{x-t}w(x;t)dx.
\end{align*}
Our expressions \eqref{AnBn-ex15} have the same form as  (2.3)-(2.4) in \cite{MinChenMMAS}. Moreover, from the above definitions of $\emph{y}_n$ and $\texttt{r}_n$, it follows that
\begin{align}\label{vry}
v_n=\texttt{r}_n-\emph{y}_n-n.
\end{align}
Plugging it back into the definition of $\texttt{r}_n$ yields
\begin{align}\label{p-ex15}
\mathbf{p}(n)=(t-1)\texttt{r}_n-t\emph{y}_n-nt.
\end{align}
In addition, the differentiation of the orthogonality relations w.r.t. $t$ gives us
\[\frac{d}{dt}\ln h_n(t)=-u_n,\qquad\qquad\frac{d}{dt}\mathbf{p}(n,t)=v_n.\]
Hence, in light of the definition of $\mathcal{R}_n$ given in \eqref{Rr-ex15} together with \eqref{vry}, we have
\begin{align}
t\frac{d}{dt}\ln h_n(t)=&2n+1+\alpha+\beta+\gamma-\mathcal{R}_n,\label{Dh-ex15}\\
\frac{d}{dt}\mathbf{p}(n,t)=&\texttt{r}_n-\emph{y}_n-n.\label{Dp-ex15}
\end{align}
Combining \eqref{Dh-ex15} with the fact that $D_n(t)=\prod_{j=0}^{n-1}h_j(t)$, and \eqref{Dp-ex15} with \eqref{p-ex15}, we find
\begin{align}
t\frac{d}{dt}\ln D_n(t)=&n(n+\alpha+\beta+\gamma)-\sum_{j=0}^{n-1}\mathcal{R}_j,\\
(t-1)\texttt{r}_n'(t) =&t\emph{y}_n'(t),
\end{align}
which are of the same form as (3.3) and (3.6) of \cite{MinChenMMAS}.
Since the main results in \cite{MinChenMMAS} were deduced by using (3.3), (3.6) and the identities obtained by substituting $A_n$ and $B_n$ into $(S_1), (S_2)$ and $(S_2')$, we conclude that the $\sigma$-form of Painlev\'{e} VI established in  \cite{MinChenMMAS} (c.f. (3.19)) to characterize $t\frac{d}{dt}\ln D_n(t)$ with $\alpha,\beta>0$ can be directly generalized to $\alpha,\beta>-1$.

We finally remark that, when $\alpha,\beta>0$,
our auxiliary quantities $\{\emph{x}_n,\emph{y}_n,\mathcal{R}_n,\texttt{r}_n\}$ defined by \eqref{R_n^*}-\eqref{r_n} are essentially $\{x_n, y_n,R_n,r_n\}$ given in \cite{MinChenMMAS}, namely,
\begin{align*}
\emph{x}_n=&\frac{\alpha}{h_n}\int_0^1P_n^2(x)w_{_{JS}}(x)\frac{dx}{x}=:x_n,\\
\mathcal{R}_n=&\frac{\beta}{h_n}\int_0^1P_n^2(x)w_{_{JS}}(x)\frac{dx}{1-x}=:R_n,\\
\emph{y}_n=&\frac{\alpha}{h_{n-1}}\int_0^1P_n(x)P_{n-1}(x)w_{_{JS}}(x)\frac{dx}{x}=:y_n,\\
\texttt{r}_n=&\frac{\beta}{h_{n-1}}\int_0^1P_n(x)P_{n-1}(x)w_{_{JS}}(x)\frac{dx}{1-x}=:r_n.
\end{align*}
In fact, when $\alpha,\beta>0$, via integration by parts, it was shown in \cite{MinChenMMAS}  (c.f. (2.8)-(2.9)) that
\[u_n=R_n-x_n,\qquad\qquad v_n=r_n-y_n-n.\]
Inserting them into \eqref{xy-ex15}-\eqref{Rr-ex15}, on account of (2.10) and (2.18) in \cite{MinChenMMAS}  which were deduced from $(S_1)$ and $(S_2)$ respectively,  we arrive at the desired relations.
\end{exm}
		
\section{Conclusions}
\sloppy
In the existing literature, the ladder operators for the monic Laguerre-type, Jacobi-type and shifted Jacobi-type orthogonal polynomials were established under the parameter constraints $\lambda,\alpha,\beta>0$. They have been widely applied to study the recurrence coefficients of orthogonal polynomials, the partition functions, gap probabilities and linear statistics of Hermitian random matrices, etc. Although the obtained results are valid for $\lambda,\alpha,\beta>0$, the corresponding weight functions are actually well-defined for the broader range $\lambda,\alpha,\beta>-1$. This observation motivates our extension of the ladder operator formalism to this more general parameter space.
We develop two distinct approaches to construct the ladder operators: through the Riemann-Hilbert problem satisfied by the monic orthogonal polynomials; expressing $(1-z^2)P_n'(z)$ or $z(1-z)P_n'(z)$ as a linear combination of $\{P_k(z)\}_{k=0}^{n+1}$ for the Jacobi cases.

We validate our findings by investigating the monic Laguerre and Jacobi polynomials.
Moreover, we revisit several problems concerning the above-mentioned three types of weight functions, and generalize the results in prior work derived by using the ladder operator approach from $\lambda,\alpha,\beta>0$ to $\lambda,\alpha,\beta>-1$. To achieve this goal, we first compute $A_n$ and $B_n$ for $\lambda,\alpha,\beta>-1$ by using the definitions and verify that their expressions are consistent with those for $\lambda,\alpha,\beta>0$. Compared with prior literature, our calculation of $A_n$ and $B_n$ is typically more straightforward and sometimes their mathematical forms are also simpler, particularly regarding the monic Jacobi polynomials in Example \ref{classicJ}. Then, by differentiating the orthogonality relation w.r.t. the variable introduced in the weight function, we obtain differential equalities identical to the ones in previous studies for $\lambda,\alpha,\beta>0$. The above-mentioned two consistency guarantees ensure that the results deduced via the ladder operator approach, under the assumption that $\lambda,\alpha,\beta>0$, remain valid for $\lambda,\alpha,\beta>-1$.

In Examples \ref{ex-J3}, \ref{exJS-2} and \ref{JS-FH}, verifying the desired differential relations' consistency demands careful technique, particularly in Example \ref{ex-J3}. In addition, although in some examples (especially for the Jacobi cases), the definitions of our auxiliary quantities appear noticeably distinct from those in the existing literature, we show that they are actually identical when $\alpha,\beta>0$. See, e.g., Examples \ref{exJS-2} and \ref{JS-FH}.

\section*{Acknowledgements}
This work was supported by National Natural Science Foundation of China under grant numbers 12101343 and 12371257, and by Shandong Provincial Natural Science Foundation with project number ZR2021QA061.

\section*{Conflict of Interest Statement}
The authors declare that they have no known competing financial interests or personal relationships that could have appeared to influence the work reported in this paper.

\section*{Data availability statement}

Any data that support the findings of this study are included within the article.

\end{document}